\newtheorem{proposition}{Proposition}[section]
\newtheorem{theorem}[proposition]{Theorem}
\theoremstyle{definition}
\newcommand{\thlabel}[1]{\label{th:#1}}
\newcommand{\thref}[1]{Theorem~\ref{th:#1}}
\newcommand{\selabel}[1]{\label{se:#1}}
\newcommand{\seref}[1]{Section~\ref{se:#1}}
\newcommand{\eqlabel}[1]{\label{eq:#1}}
\newcommand{\equref}[1]{(\ref{eq:#1})}
\def\ot{\otimes}
\def\NN{{\mathbb N}}
\def\ZZ{{\mathbb Z}}
\newcommand{\Cc}{\mathcal{C}}
\def\*C{{}^*\hspace*{-1pt}{\Cc}}
\def\text#1{{\rm {\rm #1}}}
\begin{document}

\title[Bicrossed products with the Taft algebra]
{Bicrossed products with the Taft algebra}

\author{A. L. Agore}
\address{''Simion Stoilow'' Institute of Mathematics of the Romanian Academy, P.O. Box 1-764, 014700 Bucharest, Romania \textbf{and} Vrije Universiteit Brussel, Pleinlaan 2, B-1050 Brussels, Belgium}
\email{ana.agore@vub.be and ana.agore@gmail.com}
\author{L. N\u ast\u asescu}
\address{'Simion Stoilow'' Institute of Mathematics of the Romanian Academy, P.O. Box 1-764, 014700 Bucharest, Romania}
\email{lauranastasescu@gmail.com}

\subjclass[2010]{16T10, 16T05, 16S40}

\thanks{This work was  supported by a grant of Romanian Ministery of Research and Innovation, CNCS - UEFISCDI, project number PN-III-P1-1.1-TE-2016-0124, within PNCDI III. The first named author is a fellow of FWO (Fonds voor Wetenschappelijk Onderzoek -- Flanders). The authors gratefully acknowledge the hospitality of Max
Planck Institute f\"{u}r Mathematik (Bonn), where part of this work was done.}

\subjclass[2010]{16T05, 16S40} \keywords{Bicrossed product,
the factorization problem, classification of Hopf algebras, Taft algebra}


\begin{abstract}
Let $G$ be a group which admits a generating set consisting of finite order elements. We prove that any Hopf algebra which factorizes through the Taft algebra and the group Hopf algebra $K[G]$ (equivalently, any bicrossed product between the aforementioned Hopf algebras) is isomorphic to a smash product between the same two Hopf algebras. The classification of these smash products is shown to be strongly linked to the problem of describing the group automorphisms of $G$. As an application, we completely describe by generators and relations and classify all bicrossed products between the Taft algebra and the group Hopf algebra $K[D_{2n}]$, where $D_{2n}$ denotes the dihedral groups.
\end{abstract}

\maketitle

\section*{Introduction}

The bicrossed product construction first emerged in group theory,
under the name of Zappa-Sz\' ep product or knit product,  as a
natural generalization of the semi-direct product; see for
instance \cite[Section 2]{Takeuchi} where the terminology adopted
in the sequel originates. More precisely, any bicrossed product of
groups is constructed from a so-called matched pair of groups,
i.e. a pair of groups $(G,\,H)$ which act on each other by means
of two actions subject to some very natural compatibility
conditions (see \cite[Definition 2.1]{Takeuchi}). The two actions
of a matched pair induce a group structure on the direct product
$G \times H$ of the underlying sets, called the bicrossed product
of $G$ and $H$. Furthermore, it was shown that the bicrossed
product provides the answer to the factorization problem in the
sense that any group which factorizes through two given groups is
isomorphic to a bicrossed product between the same groups. Due to
the interest generated by its connection to the factorization
problem, the bicrossed product construction was considered in
varied contexts (e.g. for (co)algebras \cite{CIMZ}, \cite{cap},
Lie algebras \cite{Y}, \cite{majid3}, Hopf algebras \cite{majid}
etc.) and nowadays is being studied for many different purposes.
For instance, in the case of associative algebras this
construction is often referred to as twisted tensor product and it
seems to provide the proper algebraic counterpart for the product
of noncommutative spaces under the duality between the categories
of commutative algebras and algebraic affine spaces (see
\cite{ggv}, \cite{Jara} for further details). Another strong
motivation for studying bicrossed products stems from their link
to the (quantum) Yang-Baxter equation; more precisely, there are
several methods to produce solutions for the Yang-Baxter equation
by means of matched pairs and the corresponding bicrossed products
(see e.g. \cite{agv}, \cite{GIM}). Furthermore, the bicrossed
product construction turns out to be general enough to recover as
special cases many of the classical constructions in group theory
(e.g. semi-direct products), associative algebras (e.g. skew group
algebras, Ore extensions), Lie algebras (complex product
structures \cite{AS}) or Hopf algebras (e.g. the Drinfel'd double,
the generalized quantum double). However, even in the simplest
cases the problem of describing and classifying all bicrossed
products between two given objects (groups, associative algebras,
Lie algebras etc.) turns out to be very difficult. Probably the
most conclusing argument comes from the group case where the
description and classification of all bicrossed products between
two finite cyclic groups is still an open question (see
\cite{acim} for an up-to-date account on the subject).
Nevertheless, this topic has been intensively  studied in several
papers including \cite{a1}, \cite{abm}, \cite{Gabi}, \cite{CIMZ},
\cite{cap}, \cite{ggv}, \cite{Jara}, to mention only a few, and it
turned out to be an effective method of constructing new objects
(groups, associative algebras, Hopf algebras etc.) out of given
ones.

The present study continues the work of \cite{abm} where a
strategy for classifying bicrossed products of Hopf algebras was
proposed. This avenue of investigation based on the description of
all morphisms between two bicrossed products was previously
pursued in \cite{a1}, \cite{a2}, \cite{Gabi}, \cite{K} and led to
promising results. The paper is structured as follows.
\seref{prel} gives a brief review of some preparatory results used
in the sequel while the main results are contained in \seref{2}.
To start with, if $G$ is a group which admits a generating set
consisting of finite order elements, then \thref{main1} proves
that any bicrossed product between the Taft algebra and the group
Hopf algebra $K[G]$ is in fact a smash product between the same
two Hopf algebras. Although we are able to prove that the right
action of any such bicrossed product is always trivial, the
description of the left action depends essentially on the
relations, if any, satisfied by the generators of $G$. Along the
way, we describe by generators and relations all matched pairs
between the Taft algebra and the group Hopf algebra
$K[D_{ln}^{k}]$, where $D_{ln}^{k}$ is the semi-direct product of
cyclic groups defined in \equref{000.1}. More precisely, these are
implemented by two roots of unity $\beta \in  U_{l}(K) $, $\sigma
\in U_{(n, \,k-1)}(K)$ and the corresponding smash products,
denoted by $T_{lnm^{2}}^{\beta,\,\gamma}(q)$, are described by
generators and relations in \thref{main2}.

The classification of the smash products between the Taft algebra
and the group Hopf algebra $K[G]$ is shown to be strongly linked
to the problem of describing the group automorphisms of $G$
(\thref{izo2}). Since the problem of describing the automorphisms
of a given group is widely open and it leads to cumbersome
computations even for the groups $D_{ln}^{k}$ (see \cite{gg}), in
this paper we restrict to the classification of Hopf algebras
which factorize through the Taft algebra and the group Hopf
algebra $K[D_{2n}]$. More precisely, \thref{mainclasif} provides
necessary and sufficient conditions for two bicrossed products
between the Taft Hopf algebra and $K[D_{2n}]$ to be isomorphic
while \thref{mainclasif2} counts the number of types of these
bicrossed products which is shown to depend on the arithmetics of
$m$ and $n$.

It is worth mentioning that some of the Hopf algebras constructed in this paper as bicrossed products have shown up as part of other classification problems (see \cite{a1}, \cite{chen}, \cite{DIN}, \cite{iovanov} and \cite{LL}). However, the framework considered in the aforementioned papers does not allow for an explicit classification as the one performed in \thref{mainclasif} or \thref{mainclasif2}.

\section[Section 1]{Preliminaries}\selabel{prel}
Throughout $K$ will be a field. Unless specified otherwise, all algebras, coalgebras, bialgebras,
Hopf algebras, tensor products and homomorphisms are over $K$. For
a coalgebra $(C,\, \Delta,\, \varepsilon)$ and $y \in C$, we use Sweedler's $\Sigma$-notation: $\Delta(y) =
y_{(1)}\ot y_{(2)}$, $(I\ot\Delta)\Delta(y) = y_{(1)}\ot
y_{(2)}\ot y_{(3)}$, etc (summation understood). Let $A$ and $H$
be two Hopf algebras. $H$ is called a left (resp. right) $A$-module (co)algebra if $H$ is a (co)algebra in the monoidal category
of left (resp. right) $A$-modules.

For a positive integer
$n$ we denote by $U_n (K)$ the cyclic group of $n$-th roots of unity in $K$ and by $|U_n (K)|$ its order. Whenever $|U_n (K)|  = n$, any generator of $U_n (K)$ is called a primitive $n$-th root of unity.  $C_{n}$ stands for the cyclic group of order $n$ while $(m,\,n)$ denotes the greatest common divisor of $m$, $n \in \NN^{*}$. Furthermore, given integers $k$, $l$, $n \in \NN^{*}$ with $k^{l} \equiv 1 ({\rm mod}\, n)$ we denote by $D_{ln}^{k}$ the following semi-direct product of finite cyclic groups, often called split
metacyclic groups in the literature (\cite{gg}):
\begin{equation}\eqlabel{000.1}
C_{n} \rJoin_{k} C_{l} \,=\, < c,\, d\, |\, c^{l} = 1,\, d^{n} = 1,\, cd = d^{k}c >
\end{equation}
If $l = 2$ and $k = n-1$ we recover the dihedral groups which will be denoted simply by $D_{2n}$.
Given a group $G$, $K[G]$ stands for the group Hopf algebra on $G$. If $H$ is a Hopf algebra then $\mathcal{G}(H)$ denotes the group-like elements of $H$. Furthermore, for any $g$, $h \in \mathcal{G}(H)$ we denote by $\mathcal{P}_{g,\,h}(H)$ the set of $(g,\,h)$ - skew primitive elements of $H$, i.e. $y \in \mathcal{P}_{g,\,h}(H)$ if $ \Delta(y) = y \ot g + h \ot y$.

Consider $m \in \NN$, $m \geq 2$; since the Taft Hopf algebra $T_{m^{2}}(q)$ plays a central role in this paper, we will assume that the base field $K$ contains a primitive $m$-th root of unity $q$. Recall that the Taft Hopf algebra of order $m$ over $K$ is generated as an algebra by two elements $h$ and $x$
subject to the relations $h^{m} = 1$, $x^{m} = 0$ and $xh = qhx$. The coalgebra structure is such that $h$ is a group-like element and $x$ is a $(h,\,1)$ - skew primitive element.
Furthermore, $\{h^{i}x^{j}\}_{0 \leq i,\,j \leq m-1}$ is a $K$-linear basis of
the Taft algebra, the set of group-like elements is
$\mathcal{G}\bigl(T_{m^{2}}(q)\bigl) = \{h^{i} \, |\, i \in 0, \cdots,\, m-1 \}$ and the $(h^{j},\,
1)$-skew primitive elements are given as follows for any $j =
\{0,\,1, \cdots,\, m-1\}$:
\begin{equation}\eqlabel{01}
\mathcal{P}_{h^{j},\, 1} \bigl(T_{m^{2}}(q)\bigl) = \left \{\begin{array}{rcl}
\alpha(h^{j} - 1), \, & \mbox { if }& j \neq 1\\
\beta(h-1)+ \gamma x, \, & \mbox { if }& j=1
\end{array} \right.,\,\, {\rm for}\,\, {\rm some}\,\, \alpha, \, \beta,\, \gamma \in K.
\end{equation}

A \textit{matched pair} of Hopf algebras is a
quadruple $(A, H, \triangleleft, \triangleright)$, where $A$ and $H$
are Hopf algebras, $\triangleleft : H \otimes A \rightarrow H$,
$\triangleright: H \otimes A \rightarrow A$ are linear maps
such that $(A, \triangleright)$ is a left $H$-module coalgebra,
$(H, \triangleleft)$ is a right $A$-module coalgebra and the
following compatibilities hold for any $a$, $b\in A$, $y$, $z\in
H$.
\begin{eqnarray}
y \triangleright1_{A} &{=}& \varepsilon_{H}(h)1_{A}, \,\, 1_{H}
\triangleleft a =
\varepsilon_{A}(a)1_{H} \eqlabel{mp1} \\
y \triangleright(ab) &{=}& (y_{(1)} \triangleright a_{(1)}) \bigl
( (y_{(2)}\triangleleft a_{(2)})\triangleright b \bigl)
\eqlabel{mp2} \\
(y z) \triangleleft a &{=}& \bigl( y \triangleleft (z_{(1)}
\triangleright a_{(1)}) \bigl) (z_{(2)} \triangleleft a_{(2)})
\eqlabel{mp3} \\
y_{(1)} \triangleleft a_{(1)} \otimes y_{(2)} \triangleright
a_{(2)} &{=}& y_{(2)} \triangleleft a_{(2)} \otimes y_{(1)}
\triangleright a_{(1)} \eqlabel{mp4}
\end{eqnarray}
Any matched pair of Hopf algebras $(A, H, \triangleleft, \triangleright)$ gives rise to a new Hopf algebra on
the tensor coalgebra $A\ot H$ with the multiplication and antipode given for all $a$, $b \in A$, $y$, $z\in H$ as follows:
\begin{eqnarray*}
(a \bowtie y) \cdot (b \bowtie z) &=& a\, (y_{(1)}\triangleright
b_{(1)}) \bowtie (y_{(2)} \triangleleft b_{(2)})\, z\\
S_{A \bowtie H} ( a \bowtie y ) &=& S_H (y_{(2)}) \triangleright S_A
(a_{(2)}) \, \bowtie \, S_H (y_{(1)}) \triangleleft S_A (a_{(1)})
\end{eqnarray*}
where we denote $a\ot y$ by $a\bowtie y$. This new Hopf algebra is called the \textit{bicrossed product} associated to the matched pair of Hopf algebras $(A, H, \triangleleft, \triangleright)$ and it will be denoted by $A \bowtie H$. The left (resp. right) action $\triangleright: H \otimes A \rightarrow
A$ (resp. $\triangleleft : H
\otimes A \rightarrow H$) is called trivial if $y \triangleright a = \varepsilon_H(y) a$ (resp. $y \triangleleft a = \varepsilon_A
(a) y$) for all $a\in A$ and $y\in H$.

The smash product of Hopf algebras (also called semi-direct product) appears as a special case of the bicrossed product construction. More precisely, if $(A, \triangleright)$ is a left $H$-module coalgebra and we
consider $H$ as a right $A$-module coalgebra via the trivial
action, i.e. $y \triangleleft a = \varepsilon_A(a) y$, then $(A,
H, \triangleleft, \triangleright)$ is a matched pair of Hopf
algebras if and only if $(A, \triangleright)$ is also a left
$H$-module algebra and the following compatibility condition holds for all $y \in H$ and $a\in A$:
\begin{eqnarray*}
y_{(1)} \otimes y_{(2)} \triangleright a =  y_{(2)} \otimes
y_{(1)} \triangleright a.
\end{eqnarray*}
In this case, the associated
bicrossed product $A\bowtie H = A\# H$ is the left version of the
\emph{smash (semi-direct) product} of Hopf algebras. Thus, $A\# H$ is
the tensor coalgebra $A\ot H$, with the following multiplication:
\begin{eqnarray*}
(a \# y) \cdot (b \# z):= a \, (y_{(1)} \triangleright b )\,\#\,
y_{(2)}\, z
\end{eqnarray*}
for all $a$, $b\in A$, $y$, $z\in H$, where we denote $a\ot y$ by
$a\# y$. Symmetrically, we can define the right version of the
smash product of Hopf algebras by considering $A$ to be a left
$H$-module coalgebra via the trivial action.

Other important examples of bicrossed products include the classical Drinfel'd double (\cite[Theorem
IX.3.5]{Kassel}) as well as the generalized quantum double  (\cite[Example 7.2.6]{majid}). We refer the reader to \cite{majid} for more details and further examples.

Bicrossed products were introduced in connection to the factorization problem. More precisely, any Hopf algebra which factorizes through two given Hopf algebras is isomorphic to a bicrossed product between the same two Hopf algebras (\cite{majid3}). Recall that the factorization problem for Hopf algebras consists of describing and classifying all Hopf algebras which factorize through two given Hopf algebras, i.e. all Hopf algebras for which there exist injective Hopf algebra maps $i : A \to E $ and $j : H\to E$ such that the map $A \ot H \to E, \quad a \ot y \mapsto i(a) j(y)$ is bijective.

\section[Section 2]{Main results}\selabel{2}

In this section we investigate bicrossed products between the Taft algebra and the group Hopf algebra of certain groups or, equivalently, Hopf algebras which factorize through the aforementioned Hopf algebras. More precisely, our first result states that if the group $G$ admits a generating set consisting of finite order elements then any bicrossed product between the Taft algebra and $K[G]$ is in fact a smash product.

\begin{theorem}\thlabel{main1}
Let $G$ be a group generated by finite order elements. Then any Hopf algebra which factorizes through the Taft algebra $T_{m^{2}}(q)$ and the group Hopf algebra $K[G]$ is isomorphic to a smash product between the same two Hopf algebras.
\end{theorem}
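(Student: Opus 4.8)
The plan is to prove that the right action $\triangleleft$ of an arbitrary matched pair $(T_{m^{2}}(q), K[G], \triangleleft, \triangleright)$ is trivial. By the discussion preceding the theorem this is exactly what is required: a matched pair whose right action is trivial is automatically a smash product, since then the ``iff'' recalled there forces $(T_{m^{2}}(q), \triangleright)$ to be a left $K[G]$-module algebra and the compatibility $y_{(1)}\otimes y_{(2)}\triangleright a = y_{(2)}\otimes y_{(1)}\triangleright a$ to hold (the latter being immediate anyway, as $K[G]$ is cocommutative). Because $T_{m^{2}}(q)$ is generated as an algebra by $h$ and $x$, and because the finite-order generators of $G$ generate it as a \emph{monoid} (an inverse of a finite-order element is a positive power of it), it suffices to show that $g \triangleleft h = g$ and $g \triangleleft x = 0$ for every generator $g$; the general statement then follows from the module property $g\triangleleft(ab)=(g\triangleleft a)\triangleleft b$ (extending in the second variable) together with \equref{mp3} (extending from generators to products in $G$).

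First I would record the coalgebra constraints. Since $(K[G],\triangleleft)$ is a right $T_{m^{2}}(q)$-module coalgebra and $g$, $h$ are group-like, $g\triangleleft h$ is a group-like of $K[G]$, hence lies in $G$; set $\phi(g):=g\triangleleft h$. Likewise $g\triangleleft x$ is a $(\phi(g),g)$-skew-primitive of $K[G]$, and as a group algebra has no nontrivial skew-primitives, $g\triangleleft x\in K\,(g-\phi(g))$, which vanishes the moment $\phi(g)=g$. Dually, $g\triangleright h$ is a group-like of $T_{m^{2}}(q)$, so $g\triangleright h=h^{t(g)}$ for some $t(g)$, and $g\triangleright x$ is an $(h^{t(g)},1)$-skew-primitive, hence of the shape given in \equref{01}. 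The whole theorem therefore reduces to the single claim that $\phi(g)=g$ for every generator $g$.

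The hard part is precisely this claim, and it is where the finiteness of the order of $g$ is used. Suppose $\phi(g)\neq g$. Apply $\mathrm{id}\otimes\pi$ to \equref{mp4} with $y=g$ and $a=x$, where $\pi$ extracts the coefficient of $x$ in the basis $\{h^{i}x^{j}\}$: using $\Delta(x)=x\otimes h+1\otimes x$, the left-hand side contributes $\gamma_{g}\,g$ and the right-hand side $\gamma_{g}\,\phi(g)$, where $\gamma_{g}$ is the $x$-coefficient of $g\triangleright x$. As $g$ and $\phi(g)$ are distinct basis vectors of $K[G]$, this forces $\gamma_{g}=0$, i.e. $g\triangleright x$ lies in the span $K[\langle h\rangle]$ of the group-likes of $T_{m^{2}}(q)$. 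But the left action sends group-likes to group-likes, hence preserves $K[\langle h\rangle]$, so iterating the module action gives $g^{k}\triangleright x\in K[\langle h\rangle]$ for all $k\geq 1$. Taking $k$ equal to the order of $g$ yields $x=1\triangleright x=g^{\mathrm{ord}(g)}\triangleright x\in K[\langle h\rangle]$, which is absurd since $x$ is not a linear combination of group-likes. Hence $\phi(g)=g$.

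With $\phi(g)=g$ in hand we get $g\triangleleft h=g$ and $g\triangleleft x\in K\,(g-\phi(g))=0$, so $\triangleleft$ is trivial on the generators and therefore, by the reduction above, globally; the bicrossed product is thus the smash product $T_{m^{2}}(q)\#K[G]$. I expect the only delicate bookkeeping to be the inductive extension from generators to all of $G$ via \equref{mp3}, and the careful identification of $g\triangleleft h$, $g\triangleleft x$ and $g\triangleright x$ through the group-like and skew-primitive descriptions (in particular \equref{01}); the conceptual core is the short finiteness argument above. It is perhaps worth noting that this argument uses only the coalgebra structure of the Taft algebra, not its defining relations nor the primitivity of $q$, which accounts for the uniformity of the conclusion.
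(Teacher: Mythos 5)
Your proposal is correct, and its overall skeleton matches the paper's: reduce to triviality of the right action, argue generator by generator using the group-like/skew-primitive classifications, apply \equref{mp4} to the pair $(g,x)$, exploit the finite order of $g$, and then extend to all of $G$ and $T_{m^{2}}(q)$. Where you genuinely diverge is in the execution of the key step, and your version is leaner. The paper first pins down the left action completely: three separate finite-order induction arguments rule out $g \triangleright h = 1$, rule out $g \triangleright h = h^{i}$ with $i \neq 1$, and show that the coefficient $\beta$ of $x$ in $g \triangleright x$ satisfies $\beta^{u}=1$, hence $\beta \neq 0$; only then does \equref{mp4} force $g \triangleleft h = g$ and $g \triangleleft x = 0$. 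You instead run the contrapositive: if $g \triangleleft h \neq g$, then \equref{mp4} kills the $x$-coefficient of $g \triangleright x$, so by \equref{01} the element $g \triangleright x$ lies in the span $K[\langle h\rangle]$ of the group-likes; since group-likes act on group-likes, this span is invariant under the left action, and iterating ${\rm ord}(g)$ times gives $x = g^{{\rm ord}(g)} \triangleright x \in K[\langle h\rangle]$, a contradiction. This single invariance argument replaces the paper's case analysis, never requires determining the left action, and, as you observe, avoids the relation $xh=qhx$ altogether (the paper invokes it to get $\alpha=0$, a fact needed for \thref{main2} but not for \thref{main1}). Your extension step is also slightly cleaner: each generator's trivial action propagates to all of $T_{m^{2}}(q)$ by the right-module axiom alone, and \equref{mp3} then handles products in $G$, whereas the paper intertwines \equref{mp2} and \equref{mp3}. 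The only trade-off is that the paper's extra work on $\triangleright$ is not wasted in context, since the explicit form of the left action is exactly what \thref{main2} requires, while your argument isolates the minimal input needed for \thref{main1} itself.
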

\begin{proof}
Consider $S$ to be a generating set of $G$ consisting of finite order elements and let $g \in S$. According to our assumption we have ${\rm ord}(g) = u$, where $u \in \NN^{*}$.

Let $(T_{m^{2}}(q), K[G], \triangleleft, \triangleright)$ be a
matched pair of Hopf algebras where $\triangleleft : K[G] \ot T_{m^{2}}(q) \to K[G]$, $\triangleright : K[G] \ot T_{m^{2}}(q) \to T_{m^{2}}(q)$ are coalgebra maps satisfying the compatibility conditions \equref{mp1}-\equref{mp4}. We will prove that the right action $\triangleleft$ is trivial. Indeed, by \cite[Lemma 5.1]{abm} or directly by module coalgebra axioms, we have $g \triangleright h \in \mathcal{G}(T_{m^{2}}(q))$, i.e. $g \triangleright h = h^{i}$ for some $i \in \{0, 1, \cdots, m-1\}$. Assume first that $i = 0$; by induction one obtains $1 = g^{u} \triangleright h = h$ which is an obvious contradiction. Thus we have $i \in \{1, 2, \cdots, m-1\}$. Suppose now that $i \neq 1$;  in this case \cite[Lemma 5.1]{abm} gives $g \triangleright x \in \mathcal{P}_{g\, \triangleright h,\, g\, \triangleright 1} \bigl(T_{m^{2}}(q)\bigl) = \mathcal{P}_{h^{i},\, 1} \bigl(T_{m^{2}}(q)\bigl)$ and since $i \neq 1$ it follows from \equref{01} that $g \triangleright x = \alpha (h^{i} - 1)$ for some $\alpha \in K$. By induction we obtain $x = g^{u}  \triangleright x = \alpha g^{u-1}  \triangleright h^{i} - \alpha$ and we have reached a contradiction as \cite[Lemma 5.1]{abm} implies $g^{u-1}  \triangleright h^{i} \in \mathcal{G}\bigl(T_{m^{2}}(q)\bigl)$. Hence we are led to $g \triangleright h = h$. Furthermore, this implies $g \triangleright x = \alpha (1 - h) + \beta \, x$ for some $\alpha$, $\beta \in K$. Again by induction we arrive at $x = g^{u} \triangleright x = \alpha (1 + \beta + \dots + \beta^{u-1}) (1-h) + \beta^{u} x$ which yields:
\begin{eqnarray}
\alpha (1 + \beta + \dots + \beta^{u-1}) = 0,\quad\quad \beta^{u}  = 1. \eqlabel{111.2}
\end{eqnarray}
For the rest of the proof we will focus on the right action $\triangleleft$. Using \cite[Lemma 5.1]{abm} we obtain $g \triangleleft h \in \mathcal{G}(K[G])$. Remark that the case $g \triangleleft h = 1$ can be easily ruled out by the same arguments used for the left action. Hence $g \triangleleft h = g_{i_{1}}^{a_{1}} g_{i_{2}}^{a_{2}} \cdots g_{i_{s}}^{a_{s}}$ for some $s$, $a_{1}, \cdots, a_{s} \in \NN^{*}$ and $g_{i_{1}}, \cdots, g_{i_{s}} \in S$. Moreover, again by \cite[Lemma 5.1]{abm} we have $g \triangleleft x \in \mathcal{P}_{g \triangleleft h,\, g \triangleleft 1}(K[G]) = \mathcal{P}_{g_{i_{1}}^{a_{1}} g_{i_{2}}^{a_{2}} \cdots g_{i_{s}}^{a_{s}},\, g}(K[G])$, i.e. $g \triangleleft x = \gamma \Bigl(g - g_{i_{1}}^{a_{1}} g_{i_{2}}^{a_{2}} \cdots g_{i_{s}}^{a_{s}}\Bigl)$ for some $\gamma \in K$. Now we apply \equref{mp4} for the pair $(g,\,x)$ and we obtain:
$$g \triangleleft  x \ot h + g \ot g \triangleright x = g \triangleleft  h \ot g \triangleright x + g \triangleleft  x \ot 1.$$
Furthermore, this gives:
$$
\gamma g \ot h - \gamma g_{i_{1}}^{a_{1}} g_{i_{2}}^{a_{2}} \cdots g_{i_{s}}^{a_{s}} \ot h + \alpha g \ot 1 - \alpha g \ot h + \beta g \ot x =
$$
\begin{equation}\eqlabel{111.3}
\alpha g_{i_{1}}^{a_{1}} g_{i_{2}}^{a_{2}} \cdots g_{i_{s}}^{a_{s}} \ot 1 - \alpha g_{i_{1}}^{a_{1}} g_{i_{2}}^{a_{2}} \cdots g_{i_{s}}^{a_{s}} \ot h + \beta g_{i_{1}}^{a_{1}} g_{i_{2}}^{a_{2}} \cdots g_{i_{s}}^{a_{s}} \ot x + \gamma g \ot 1 - \gamma g_{i_{1}}^{a_{1}} g_{i_{2}}^{a_{2}} \cdots g_{i_{s}}^{a_{s}} \ot 1
\end{equation}
In particular, \equref{111.3} implies $\beta g \ot x = \beta g_{i_{1}}^{a_{1}} g_{i_{2}}^{a_{2}} \cdots g_{i_{s}}^{a_{s}} \ot x$ and since by \equref{111.2} we have  $\beta \neq 0$ it follows that $g_{i_{1}}^{a_{1}} g_{i_{2}}^{a_{2}} \cdots g_{i_{s}}^{a_{s}} = g$. Therefore, we have $g \triangleleft h = g$ and $g \triangleleft x = 0$.

Next, by using the compatibility condition \equref{mp2} we arrive at the following:
\begin{eqnarray*}
g \triangleright hx &\stackrel{\equref{mp2}} {=}& (g \triangleright h) \Bigl((g \triangleleft h) \triangleright x\Bigl) = h (g \triangleright x) = \alpha (h - h^{2}) + \beta hx\\
g \triangleright xh &\stackrel{\equref{mp2}} {=}& (g \triangleright x_{(1)}) \Bigl((g \triangleleft x_{(2)}) \triangleright h\Bigl) = (g \triangleright x) \Bigl((g \triangleleft h) \triangleright h\Bigl) + (g \triangleright 1) \Bigl((g \triangleleft x) \triangleright h\Bigl)\\
&=&  \alpha (h - h^{2}) + \beta xh = \alpha (h - h^{2}) + \beta q hx
\end{eqnarray*}
As $xh = q hx$, using the above computation gives:
$$
q\alpha h - q\alpha h^{2} + q\beta hx = \alpha h - \alpha h^{2} + \beta xh
$$
and since $q \neq 1$ we obtain $\alpha = 0$. Thus $g \triangleright x = \beta x$.

Note that the above arguments apply to any finite order element $g_{i}$ of $G$ and we denote by $\beta_{i}$ the corresponding $u_{i}$ root of unity, where $u_{i} = {\rm ord}(g_{i})$. We are now ready to prove that the right action $\triangleleft : K[G] \ot T_{m^{2}}(q) \to K[G]$ is indeed trivial. To this end, it can easily be seen, using induction and \equref{mp3}, that  for all $\nu \in \{0,\, 1,\, \cdots,\, u_{i}-1\}$ we have $g_{i}^{\nu}  \triangleleft h = g_{i}^{\nu}$. Hence, we obtain:
\begin{eqnarray}\eqlabel{11.11}
g_{i}^{\nu}  \triangleleft h^{j} = g_{i}^{\nu},\,\, {\rm for}\, {\rm all}\, \nu \in \{0,\, 1,\, \cdots,\, u_{i}-1\}\, {\rm and}\, j  \in \{0,\, 1,\, \cdots,\, m-1\}.
\end{eqnarray}
Similarly, using \equref{mp2}, yields:
\begin{eqnarray}\eqlabel{22.22}
g_{i}^{\nu}  \triangleright h^{j} = h^{j},\,\, {\rm for}\, {\rm all}\, \nu \in \{0,\, 1,\, \cdots,\, u_{i}-1\}\, {\rm and}\, j  \in \{0,\, 1,\, \cdots,\, m-1\}.
\end{eqnarray}
By putting together \equref{11.11} and  \equref{22.22} gives:
\begin{eqnarray*}
g_{i_{1}}^{\nu_{1}} g_{i_{2}}^{\nu_{2}}  \triangleleft h^{j} \stackrel{\equref{mp3}} {=} \bigl(g_{i_{1}}^{\nu_{1}} \triangleleft (g_{i_{2}}^{\nu_{2}}
\triangleright h^{j}) \bigl) (g_{i_{2}}^{\nu_{2}} \triangleleft h^{j}) \stackrel{\equref{22.22}} {=}  (g_{i_{1}}^{\nu_{1}} \triangleleft h^{j}) (g_{i_{2}}^{\nu_{2}} \triangleleft h^{j}) \stackrel{\equref{11.11}} {=} g_{i_{1}}^{\nu_{1}} g_{i_{2}}^{\nu_{2}}
\end{eqnarray*}
for all $g_{i_{1}}$, $g_{i_{2}} \in S$,  $\nu_{1} \in \{0,\, 1,\, \cdots,\, u_{i_{1}}-1\}$, $\nu_{2} \in \{0,\, 1,\, \cdots,\, u_{i_{2}}-1\}$ and $j  \in \{0,\, 1,\, \cdots,\, m-1\}$.
Inductively, we obtain:
\begin{eqnarray}\eqlabel{33.33}
g_{i_{1}}^{\nu_{1}} g_{i_{2}}^{\nu_{2}} \cdots g_{i_{t}}^{\nu_{t}}  \triangleleft  h^{j}  = g_{i_{1}}^{\nu_{1}} g_{i_{2}}^{\nu_{2}} \cdots g_{i_{t}}^{\nu_{t}}
\end{eqnarray}
for all $g_{i_{l}} \in S$, $\nu_{l} \in \{0,\, 1,\, \cdots,\, u_{i_{l}}-1\}$, $l \in \{1,\, 2,\, \cdots,\, t \}$ and $j  \in \{0,\, 1,\, \cdots,\, m-1\}$.
Furthermore, we also have:
$$
g_{i}^{2}  \triangleleft x \stackrel{\equref{mp3}} {=} \bigl(g_{i} \triangleleft (g_{i}
\triangleright x_{(1)}) \bigl) (g_{i} \triangleleft x_{(2)}) = \bigl(g_{i} \triangleleft (g_{i}
\triangleright x) \bigl) (g_{i} \triangleleft h) + \bigl(g_{i} \triangleleft (g_{i}
\triangleright 1) \bigl) (g_{i} \triangleleft x) =  0
$$
and using induction again we get $g_{i}^{\nu}  \triangleleft x^{j} = 0$ for all $\nu \in \{0,\, 1,\, \cdots,\, u_{i}-1\}$ and $j  \in \{1,\,2,\, \cdots,\, m-1\}$. Now remark that we actually proved that:
\begin{eqnarray}
g_{i}^{\nu}  \triangleleft x^{j} = g_{i}^{\nu} \varepsilon(x^{j}),\,\, {\rm for}\, {\rm all}\, \nu \in \{0,\, 1,\, \cdots,\, u_{i}-1\}\, {\rm and} \, j  \in \{0,\, 1,\, \cdots,\, m-1\}.
\end{eqnarray}
As before, one can easily prove that:
\begin{eqnarray}\eqlabel{44.44}
g_{i_{1}}^{\nu_{1}} g_{i_{2}}^{\nu_{2}} \cdots g_{i_{t}}^{\nu_{t}}  \triangleleft  x^{j}  = g_{i_{1}}^{\nu_{1}} g_{i_{2}}^{\nu_{2}} \cdots g_{i_{t}}^{\nu_{t}} \varepsilon(x^{j})
\end{eqnarray}
for all $g_{i_{l}} \in S$, $\nu_{l} \in \{0,\, 1,\, \cdots,\, u_{i_{l}}-1\}$, $l \in \{1,\, 2,\, \cdots,\, t \}$ and $j  \in \{0,\, 1,\, \cdots,\, m-1\}$.
Finally, by putting all the above together yields:
\begin{eqnarray*}
g_{i_{1}}^{\nu_{1}} g_{i_{2}}^{\nu_{2}} \cdots g_{i_{t}}^{\nu_{t}}  \triangleleft  h^{j}  x^{j'} = \bigl(g_{i_{1}}^{\nu_{1}} g_{i_{2}}^{\nu_{2}} \cdots g_{i_{t}}^{\nu_{t}}  \triangleleft  h^{j} \bigl)\, \triangleleft\,  x^{j'}
\stackrel{\equref{33.33}, \equref{44.44}} {=} g_{i_{1}}^{v_{1}} g_{i_{2}}^{v_{2}} \cdots g_{i_{t}}^{v_{t}}  \, \varepsilon(x^{j'})
\, =\, g_{i_{1}}^{v_{1}} g_{i_{2}}^{v_{2}} \cdots g_{i_{t}}^{v_{t}}  \, \varepsilon(h^{j}  x^{j'})
\end{eqnarray*}
for all $g_{i_{l}} \in S$, $\nu_{l} \in \{0,\, 1,\, \cdots,\, u_{i_{l}}-1\}$, $l \in \{1,\, 2,\, \cdots,\, t \}$, $j$, $j'  \in \{0,\, 1,\, \cdots,\, m-1\}$, i.e. the right action is indeed trivial and the proof is now finished.
\end{proof}

Notice that although \thref{main1} shows that the right action of
any bicrossed product between the Taft algebra $T_{m^{2}}(q)$ and
the group Hopf algebra $K[G]$ is trivial whenever $G$ admits a
generating set consisting of finite order elements, the
description of the left action is only narrow as it depends
essentially on the relations, if any, satisfied by the generators
of the group $G$. In order to illustrate this idea, in the sequel
we will focus on the case where $G = D_{ln}^{k}$ as defined in
\equref{000.1}. We will be able to describe completely by
generators and relations all Hopf algebras which factorize through
the Taft algebra and the group Hopf algebra $K[D_{ln}^{k}]$. We
start by computing all matched pairs $(T_{m^{2}}(q),
K\bigl[D_{ln}^{k}\bigl], \triangleleft, \triangleright)$: as a
consequence of \thref{main1} the right action is trivial while the
left action is implemented by two roots of unity $\beta \in
U_{l}(K)$, $\sigma \in U_{(n, \,k-1)}(K)$.

\begin{theorem} \thlabel{main2}
There exists a bijection between the set of all
matched pairs $(T_{m^{2}}(q), K\bigl[D_{ln}^{k}\bigl], \triangleleft,
\triangleright)$ and $U_{l}(K) \times U_{(n, \,k-1)}(K)$ such that the matched pair
$(\triangleleft, \triangleright)$ corresponding to $(\beta,\, \sigma) \in U_{l}(K) \times U_{(n, \,k-1)}(K)$ is given by:
\begin{equation}\eqlabel{mp}
d^{t} c^{i} \triangleright h^{j'} x^{j} = \beta^{ij}\,\sigma^{tj} h^{j'}\, x^{j}, \quad \quad \quad d^{t} c^{i} \triangleleft h^{j'}  x^{j}  = d^{t} c^{i}  \, \varepsilon(h^{j'}  x^{j})
\end{equation}
for all $t = 0, \cdots,\, n-1$, $i = 0, \cdots,\, l-1$ and $j$, $j' = 0, \cdots,\, m-1$.\\
Furthermore, any bicrossed product between $T_{m^{2}}(q)$ and $K\bigl[D_{ln}^{k}\bigl]$
is isomorphic to $T_{lnm^{2}}^
{\beta,\,\sigma}(q)$, for some $(\beta,\, \sigma) \in U_{l}(K) \times U_{(n, \,k-1)}(K)$, where $T_{lnm^{2}}^
{\beta,\,\sigma}(q)$ is the Hopf
algebra generated by $c$, $d$, $h$ and $x$ subject to the following relations:
\begin{eqnarray*}
&&\hspace*{2mm}c^{l} = d^{n} = h^{m} = 1, \quad x^{m} = 0,\\
&&c h = h  c, \quad d h =
h  d, \quad c d =
d^{k}  c,\\
&& \hspace*{-2mm} c x = \beta x c, \quad  d x = \sigma  x d,  \quad x h = q h x
\end{eqnarray*}
with the coalgebra structure and antipode given by:
$$
\Delta(c) = c \ot c, \quad \Delta(d) = d \ot d, \quad \Delta(h) = h \ot h, \quad \Delta(x) =
 x \ot h + 1 \ot x, \quad \varepsilon(x) = 0,
$$
$$
\varepsilon(a) = \varepsilon(b) = \varepsilon(h) = 1,\quad S(h) = h^{m-1},\quad S(x) = -xh^{m-1},\quad S(c) = c^{l-1},\quad S(d) = d^{n-1}.
$$
\end{theorem}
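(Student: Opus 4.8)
The plan is to first invoke \thref{main1} to reduce to the smash product case, then determine the left action $\triangleright$ on the two group generators, extract the arithmetic constraints on the resulting scalars from the relations of $D_{ln}^{k}$, and finally convert the smash product into a presentation by generators and relations via a normal form argument.

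Since $D_{ln}^{k}$ is generated by the finite order elements $c$ and $d$, \thref{main1} gives at once that the right action $\triangleleft$ of any matched pair is trivial (this is the second formula in \equref{mp}), so we are in the smash product situation and $\triangleright$ endows $T_{m^{2}}(q)$ with a left $K[D_{ln}^{k}]$-module algebra structure. The computations carried out in the proof of \thref{main1} moreover show that each $g \in \{c,d\}$ acts by $g \triangleright h = h$ and $g \triangleright x = \beta_{g}\,x$ for some root of unity $\beta_{g}$ whose order divides ${\rm ord}(g)$. Setting $\beta := \beta_{c}$ and $\sigma := \beta_{d}$, the relations $c^{l} = 1$ and $d^{n} = 1$ give $\beta \in U_{l}(K)$ and $\sigma \in U_{n}(K)$.

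The sharper constraint on $\sigma$ is forced by the relation $cd = d^{k}c$. Because $\triangleright$ is a left action, evaluating both sides on $x$ gives $c \triangleright (d \triangleright x) = d^{k} \triangleright (c \triangleright x)$, i.e. $\beta\sigma\,x = \beta\sigma^{k}\,x$, and since $\beta \neq 0$ this yields $\sigma^{k-1} = 1$, so $\sigma \in U_{(n,k-1)}(K)$. Conversely, for any $(\beta,\sigma) \in U_{l}(K) \times U_{(n,k-1)}(K)$ I would define $\triangleright$ by the formula in \equref{mp} and check directly that it is a module algebra action: as every element of $K[D_{ln}^{k}]$ is group-like, the compatibility condition singled out in \seref{prel} for the smash product holds trivially, so it remains only to verify that $c$ and $d$ act as algebra endomorphisms compatible with $h^{m}=1$, $x^{m}=0$, $xh = qhx$, and that the assignment respects $c^{l}=d^{n}=1$ and $cd = d^{k}c$; these amount precisely to $\beta^{l}=1$, $\sigma^{n}=1$ and $\sigma^{k-1}=1$, which hold by assumption. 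This establishes the claimed bijection, and iterating the module algebra property on the group-likes $c$, $d$ produces the closed formula $d^{t}c^{i} \triangleright h^{j'}x^{j} = \beta^{ij}\sigma^{tj}\,h^{j'}x^{j}$.

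It remains to translate the bicrossed product $T_{m^{2}}(q)\,\#\,K[D_{ln}^{k}]$ into the stated presentation. Identifying $c = 1 \# c$, $d = 1 \# d$, $h = h \# 1$, $x = x \# 1$ and using the smash multiplication $(a \# y)(b \# z) = a(y_{(1)} \triangleright b)\# y_{(2)}z$, the relations internal to each factor give $c^{l}=d^{n}=h^{m}=1$, $x^{m}=0$, $cd = d^{k}c$ and $xh = qhx$, while short computations such as $c\cdot x = (c \triangleright x)\# c = \beta\,xc$, $d \cdot x = \sigma\,xd$, $ch = hc$ and $dh = hd$ give the cross relations; the tensor coalgebra structure and the antipode formula for a bicrossed product yield the stated comultiplications and antipode. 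Hence there is a surjective Hopf algebra map from the abstractly presented $T_{lnm^{2}}^{\beta,\sigma}(q)$ onto the bicrossed product. The main obstacle is to see that this map is an isomorphism, i.e. that $\dim T_{lnm^{2}}^{\beta,\sigma}(q) = lnm^{2}$: I would argue that the commutation relations let one rewrite any monomial in the normal form $h^{j'}x^{j}d^{t}c^{i}$ with $0 \le j,j' \le m-1$, $0 \le t \le n-1$ and $0 \le i \le l-1$, the exponent bounds coming from $h^{m}=1$, $x^{m}=0$, $d^{n}=1$ and $c^{l}=1$; these $lnm^{2}$ monomials therefore span $T_{lnm^{2}}^{\beta,\sigma}(q)$ and map to the canonical basis $\{(h^{j'}x^{j}) \# (d^{t}c^{i})\}$ of the bicrossed product, so the surjection carries a spanning set bijectively onto a basis and is thus an isomorphism.
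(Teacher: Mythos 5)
Your proposal is correct and follows essentially the same route as the paper's proof: reduce to a smash product via \thref{main1}, extract the constraints $\beta \in U_{l}(K)$, $\sigma \in U_{(n,\,k-1)}(K)$ from the defining relations of $D_{ln}^{k}$ (the paper uses $cdc^{l-1} = d^{k}$ where you use $cd = d^{k}c$, which is the same computation), and identify the generators $h\,\#\,1$, $x\,\#\,1$, $1\,\#\,c$, $1\,\#\,d$ to obtain the presentation. The only difference is that you explicitly carry out two steps the paper leaves implicit or ``to the reader'': the converse verification that every pair $(\beta,\sigma)$ genuinely yields a matched pair, and the normal-form spanning argument showing $\dim T_{lnm^{2}}^{\beta,\,\sigma}(q) = lnm^{2}$, so that the surjection onto the smash product is an isomorphism.
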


\begin{proof}
Let $(T_{m^{2}}(q), K\bigl[D_{ln}^{k}\bigl], \triangleleft, \triangleright)$ be a
matched pair. Since the group $D_{ln}^{k}$ is generated by two finite order elements, namely $c$ and $d$, we can conclude by (the proof of) \thref{main1} that the right action is trivial and the left action is given as follows for all $i \in \{0, 1, \cdots, l-1\}$ and $j \in \{0, 1, \cdots, n-1\}$:
\begin{eqnarray}
c^{i} \triangleright h = h, \,\, c^{i} \triangleright x = \beta^{i} x,\,\, d^{j} \triangleright h = h, \,\, d^{j} \triangleright x = \sigma^{j} x \,\,  {\rm where}\,\, \beta \in U_{l}(K), \,\, \sigma \in U_{n}(K).\eqlabel{action}
\end{eqnarray}
Furthermore, since the relation  $cdc^{l-1} = d^{k}$ holds in $D_{ln}^{k}$ we have $cdc^{l-1} \triangleright  x = d^{k} \triangleright x$ which after a straightforward computation gives $\sigma x = \sigma^{k} x$, i.e. $\tau \in U_{k-1}(K)$. Therefore we obtain $\sigma \in U_{(n, \,k-1)}(K)$ as desired. Finally, the formulae in \equref{mp} can be easily derived using the defining compatibility conditions of a matched pair of Hopf algebras (i.e. \equref{mp1} - \equref{mp4}) exactly as in the proof of  \thref{main1}.

Thus any bicrossed product between $T_{m^{2}}(q)$ and $K\bigl[D_{ln}^{k}\bigl]$ is in fact a smash product $T_{m^{2}}(q)\, \# \, K\bigl[D_{ln}^{k}\bigl]$ associated to the left action determined by a pair $(\beta,\, \sigma) \in U_{l}(K) \times U_{(n, \,k-1)}(K)$ as in \equref{action}.
Up to canonical identification, $T_{m^{2}}(q) \, \#\, K[C_n]$ is generated as an algebra by
$h = h\, \# \,1$, $x = x\, \# \, 1$, $c = 1\, \#\,
c$ and $d = 1\, \#
\, d$. For instance, we have:
\begin{eqnarray*}
&& ch = (1\, \# \, c) (h\, \# \,1)= (c \triangleright h)\, \# \,(c \triangleleft h) = h\, \# \,c =  (h\, \#\, 1)(1\, \# \, c) = hc\\
&& c x = (1\, \# \,c) (x\, \# \,1) = (a \triangleright x_{(1)})\,
\#\, (c \triangleleft x_{(2)}) = (c \triangleright x) \,\# \,(c \triangleleft h) + (c \triangleright 1) \,\# \,(c \triangleleft x) =\beta \, x \,\# \,c = \beta \, x c
\end{eqnarray*}
Since the rest of the defining relations of $T_{lnm^{2}}^
{\beta,\,\sigma}(q)$ can be checked in much the same fashion as above they are left to the reader.
\end{proof}

Our next goal is to classify the Hopf algebras described in \thref{main1}. We will see that actually the quest for classifying all bicrossed products between the Taft algebra and the group Hopf algebra $K[G]$, where $G$ is a group generated by a set of finite order elements, comes down to an old and notoriously difficult problem in group theory, namely that of describing the automorphisms of a certain given group. More precisely, our next result shows that any Hopf algebra isomorphism between two smash products as in \thref{main1} is in some sense induced by a group automorphism of $G$.

\begin{theorem}\thlabel{izo2}
Let $G$ be a group generated by a set of finite order elements and
consider two smash products $T_{m^{2}}(q)\, \#\, K[G]$ and
$T_{m^{2}}(q)\, \# '\, K[G]$. If $\varphi: T_{m^{2}}(q)\, \#\,
K[G] \to T_{m^{2}}(q)\, \# '\, K[G]$ is a Hopf algebra isomorphism
then there exist two unitary coalgebra maps $u:T_{m^{2}}(q) \to
T_{m^{2}}(q)$, $r: K[G] \rightarrow T_{m^{2}}(q)$ and a Hopf
algebra automorphism $v: K[G] \rightarrow K[G]$ such that $\varphi
= \varphi_{(u,\, r,\, v)}$, where $\varphi_{(u,\, r,\, v)}:
T_{m^{2}}(q)\, \#\, K[G] \to T_{m^{2}}(q)\, \# '\, K[G]$ is given
as follows for all $a \in T_{m^{2}}(q)$ and $t \in K[G]$:
$$
\varphi_{(u,\, r,\, v)}(a\, \#\, t) = u(a) r(t_{(1)})\, \# ' \, v(t_{(2)}).
$$
\end{theorem}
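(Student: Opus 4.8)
The plan is to reconstruct $\varphi$ from its components along the two tensorands, following the general strategy for morphisms of bicrossed products. Write $A=T_{m^{2}}(q)$ and $H=K[G]$. On each smash product there are canonical Hopf algebra maps $i_A\colon A\to A\#H$, $a\mapsto a\#1$, and $i_H\colon H\to A\#H$, $t\mapsto 1\#t$, together with the coalgebra projections $p_A=\mathrm{id}_A\otimes\varepsilon_H$ and $p_H=\varepsilon_A\otimes\mathrm{id}_H$. Two facts underlie everything: because the unit acts trivially in a smash product, $p_H\colon A\#'H\to H$ is in fact a Hopf algebra map (the counit annihilates the action term of the product), while $p_A$ is only a counital coalgebra map; and for every $w\in A\#'H$ one has the reconstruction identity $w=p_A(w_{(1)})\#'p_H(w_{(2)})$, checked at once on a basis $a\#'t$. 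I would then set $u:=p_A\circ\varphi\circ i_A$ and $r:=p_A\circ\varphi\circ i_H$, which are unitary coalgebra maps $A\to A$ and $H\to A$, and $v:=p_H\circ\varphi\circ i_H$, which is a Hopf algebra endomorphism of $K[G]$ as a composite of Hopf maps. The decisive auxiliary object is the Hopf algebra map $v_A:=p_H\circ\varphi\circ i_A\colon A\to H$.

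The heart of the argument is to prove $v_A=\varepsilon_A\,1_H$. Since $v_A$ is a Hopf algebra map from the Taft algebra to the group algebra $K[G]$, the element $g:=v_A(h)$ is a grouplike with $g^{m}=1$ and $v_A(x)\in\mathcal{P}_{g,1}(K[G])$. Exactly as in the proof of \thref{main1}, the relation $xh=qhx$ together with $q\neq1$ forces $v_A(x)=0$ whenever $g\neq1$. It then remains to rule out $g\neq1$, and this is where bijectivity of $\varphi$ enters: applying the reconstruction identity to the skew primitive $x\#1$ gives $\varphi(x\#1)=u(x)\#'g$, while the fact that $\varphi(x\#1)$ is $(\varphi(h\#1),1)$-skew primitive forces $u(x)$ to be skew primitive in $A$ and, comparing the two resulting expressions for $\Delta(\varphi(x\#1))$, yields $(1\#'1-1\#'g)\otimes(u(x)\#'g)=0$. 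Injectivity of $\varphi$ prevents $u(x)\#'g$ from being zero, so $g=1$; then $v_A(x)$ is an ordinary primitive of $K[G]$ and hence $0$, giving $v_A=\varepsilon_A\,1_H$.

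Once $v_A$ is trivial the formula is immediate. From $a\#t=(a\#1)(1\#t)$ and multiplicativity of $\varphi$ we get $\varphi(a\#t)=\varphi(i_A(a))\,\varphi(i_H(t))$. Reconstruction gives $\varphi(i_H(t))=r(t_{(1)})\#'v(t_{(2)})$ and $\varphi(i_A(a))=u(a_{(1)})\#'v_A(a_{(2)})=u(a)\#'1$; multiplying these in $A\#'H$, where the trivial unit action again collapses the product, yields precisely $u(a)\,r(t_{(1)})\#'v(t_{(2)})$, that is, $\varphi=\varphi_{(u,\,r,\,v)}$.

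It remains to see that $v$ is an automorphism. As a Hopf endomorphism of $K[G]$ it is induced by a group endomorphism of $G$, and I would obtain its inverse by running the whole construction on $\varphi^{-1}$ to produce $\tilde v:=p_H\circ\varphi^{-1}\circ i_H$, then checking on grouplikes $g\in G$ that $v$ and $\tilde v$ are mutually inverse: writing $\varphi(1\#g)=h^{c}\#'v(g)$ and factoring off the grouplike $h^{c}\#'1$, whose $\varphi^{-1}$-image has trivial $H$-component by the analogue of the triviality of $v_A$, one recovers $\tilde v(v(g))=g$, and symmetrically $v(\tilde v(g))=g$. The single real obstacle is the triviality of $v_A$: it is the only step using both the special shape of the Taft algebra---the skew primitive $x$ with $xh=qhx$ and $q\neq1$---and the bijectivity of $\varphi$, and it is exactly what makes the triangular decomposition of $\varphi$ possible.
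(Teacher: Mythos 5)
Your proof is correct, and it takes a genuinely different --- and more self-contained --- route than the paper. The paper does not assemble the components of $\varphi$ by hand: it invokes the classification of \emph{all} Hopf algebra morphisms between bicrossed products (\cite[Theorem 3.2]{abm}), so that $\varphi=\psi_{(u,p,r,v)}$ for a quadruple subject to the compatibilities \equref{C1}--\equref{C6}; your $u$, $r$, $v$ are exactly those components, and your $v_A$ is the paper's $p$. The real divergence is in how triviality of the corner map $T_{m^{2}}(q)\to K[G]$ is established. The paper first shows $p(x)=0$ from $xh=qhx$, then carries out a complete analysis of $u$ (namely $u(h)=h^{d}$, with the case $d\neq 1$ excluded by bijectivity since it would force $\psi(x\,\#\,1)=0$, and then $u(x)=\gamma x$ with $\gamma\neq 0$), and only afterwards feeds this into \equref{C1} to conclude $p(h)=1$. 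You bypass the analysis of $u$ entirely: assuming $g=v_A(h)\neq 1$ gives $v_A(x)=0$, hence $\varphi(x\,\#\,1)=u(x)\,\#'\,g$, and comparing the two expansions of $\Delta\bigl(\varphi(x\,\#\,1)\bigl)$ together with injectivity of $\varphi$ forces $g=1$. Your substitute for the cited theorem --- the observation that $\varepsilon_A\ot {\rm Id}$ is a Hopf algebra map on a smash product, combined with the reconstruction identity $w=p_A(w_{(1)})\,\#'\,p_H(w_{(2)})$ --- is elementary and correct; note only that its multiplicativity holds precisely because the \emph{right} action is trivial (so the $H$-leg of a product is $y_{(2)}z$) and because the counit kills the action term, $\varepsilon_A(y\triangleright' b)=\varepsilon_H(y)\varepsilon_A(b)$; for a general bicrossed product this projection is not an algebra map, which is why your argument genuinely uses the smash-product hypothesis. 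What the paper's heavier machinery buys is reusability: the compatibilities \equref{C1}--\equref{C6} and the explicit form $u(h^{i}x^{j})=\gamma^{j}h^{i}x^{j}$ extracted along the way are exactly what \thref{mainclasif} needs later, both to derive the numerical conditions \equref{izo01}--\equref{izo02} and to build the inverse isomorphism, whereas your leaner argument, tailored to the decomposition statement alone, does not produce those byproducts.
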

\begin{proof}
Consider $S = \{g_{i} \in G ~|~ i \in I \}$ to be a generating set
of $G$ with ${\rm ord}(g_{i}) = u_{i}$, where $u_{i} \in \NN^{*}$
for all $i \in I$. According to the proof of \thref{main1}, the left actions
$\triangleright : K[G] \ot  T_{m^{2}}(q) \to T_{m^{2}}(q)$, $
\triangleright ' : K[G] \ot  T_{m^{2}}(q) \to T_{m^{2}}(q)$,
corresponding to the two smash products $T_{m^{2}}(q)\, \#\, K[G]$
and $T_{m^{2}}(q)\, \# '\, K[G]$, respectively, are given as
follows for all $i \in I$:
\begin{eqnarray*}
g_{i} \triangleright h = h,\,\,\, g_{i} \triangleright x = \alpha_{i} \, x, \,\,\, g_{i} \triangleright ' h = h,\,\,\, g_{i} \triangleright ' x = \overline{\alpha}_{i} \, x,\,\,\, {\rm where} \,\,\, \alpha_{i},  \overline{\alpha}_{i}  \in U_{u_{i}}(K).
\end{eqnarray*}

By \cite[Theorem 3.2]{abm}, the set of all Hopf algebra morphisms between $T_{m^{2}}(q)\, \#\, K[G]$ and $T_{m^{2}}(q)\, \# '\, K[G]$ is in bijective correspondence with the set of all quadruples $(u, p, r, v)$, consisting of two
unitary coalgebra maps $u:T_{m^{2}}(q) \to T_{m^{2}}(q)$ and $r: K[G] \rightarrow T_{m^{2}}(q)$, and
two Hopf algebra maps $p: T_{m^{2}}(q) \to K[G]$ and $v: K[G] \rightarrow K[G]$
subject to the following compatibilities for all $a$, $b \in T_{m^{2}}(q)$, $t$, $w \in K[G]$:
\begin{eqnarray}
u(a_{(1)}) \ot p(a_{(2)}) &{=}& u(a_{(2)}) \ot p(a_{(1)})\eqlabel{C1}\\
u(ab) &{=}& u(a_{(1)}) \, \bigl( p (a_{(2)}) \triangleright' u(b) \bigl)\eqlabel{C3}\\
r(tw) &{=}& r(t_{(1)}) \, \bigl(v(t_{(2)}) \triangleright'
r(w)\bigl)\eqlabel{C4}\\
r(t_{(1)}) \bigl(v(t_{(2)}) \triangleright' u(a) \bigl) &{=}& u
(t_{(1)} \triangleright a_{(1)}) \, \Bigl( p (t_{(2)}
\triangleright a_{(2)}) \triangleright' r(t_{(3)})
\Bigl) \eqlabel{C5}\\
v(t) p (a) &{=}& p (t_{(1)} \triangleright a) v (t_{(2)}
\eqlabel{C6}
\end{eqnarray}
The correspondence is such that the morphism $\psi_{(u, p, r, v)}$ associated to $(u, p, r, v)$ is given as follows for all $a \in T_{m^{2}}(q)$ and $t \in K[G]$:
\begin{equation}\eqlabel{morfbicros}
\psi_{(u, p, r, v)}(a\, \# \,t) = u(a_{(1)}) \, \bigl( p(a_{(2)})
\triangleright' r(t_{(1)}) \bigl) \,\, \#' \,
p(a_{(3)}) v(t_{(2)}).
\end{equation}
We start by proving that the Hopf algebra morphism $p : T_{m^{2}}(q) \to K[G]$ of any quadruple $(u, p, r, v)$ satisfying the compatibilities \equref{C1} - \equref{C6} is trivial, i.e. $p(a) = \varepsilon(a) 1$ for all $a \in T_{m^{2}}(q)$. Indeed, since $p(h) \in \mathcal{G}\bigl(K[G]\bigl)$ we have $p(h) = g_{i_{1}}^{a_{1}} g_{i_{2}}^{a_{2}} \cdots g_{i_{s}}^{a_{s}}$ for some $s$, $a_{1}, \cdots, a_{s} \in \NN$ and $g_{i_{1}}, \cdots, g_{i_{s}} \in S$. Furthermore, this implies that $p(x) \in \mathcal{P}_{p(h),\,1}\bigl(K[G]\bigl)$, i.e. $p(x) = \beta \bigl(g_{i_{1}}^{a_{1}} g_{i_{2}}^{a_{2}} \cdots g_{i_{s}}^{a_{s}} - 1\bigl)$ for some $\beta \in K$. Hence, we have:
\begin{eqnarray*}
p(hx) = p(h)p(x) =\beta g_{i_{1}}^{a_{1}} g_{i_{2}}^{a_{2}} \cdots g_{i_{s}}^{a_{s}} \bigl(g_{i_{1}}^{a_{1}} g_{i_{2}}^{a_{2}} \cdots g_{i_{s}}^{a_{s}} - 1\bigl)= \beta  \bigl(g_{i_{1}}^{a_{1}} g_{i_{2}}^{a_{2}} \cdots g_{i_{s}}^{a_{s}} - 1\bigl)g_{i_{1}}^{a_{1}} g_{i_{2}}^{a_{2}} \cdots g_{i_{s}}^{a_{s}} = p(xh).
\end{eqnarray*}
As $p(xh) = q p(hx)$ we obtain $\beta (q-1) g_{i_{1}}^{a_{1}}
g_{i_{2}}^{a_{2}} \cdots g_{i_{s}}^{a_{s}} \bigl(g_{i_{1}}^{a_{1}}
g_{i_{2}}^{a_{2}} \cdots g_{i_{s}}^{a_{s}} - 1\bigl) = 0$ and
since $q$ is a primitive $m$-th root of unity we have $q \neq 1$
and all the above comes down to $\beta \bigl(g_{i_{1}}^{a_{1}}
g_{i_{2}}^{a_{2}} \cdots g_{i_{s}}^{a_{s}} - 1\bigl) = 0$, i.e.
$p(x) = 0$.

Next we investigate the unitary coalgebra maps $u:T_{m^{2}}(q) \to T_{m^{2}}(q)$. Firstly, we obviously have $u(1) =
1$ and $u(h)=h^{d}$, where $d \in \{\, 0,\, 1, \cdots, m-1 \, \}$. Now the compatibility condition \equref{C3} gives:
$$
u(h^{2}) = u(h) \bigl(p(h) \triangleright ' u(h)\bigl) = h^{d}
\bigl(g_{i_{1}}^{a_{1}} g_{i_{2}}^{a_{2}} \cdots g_{i_{s}}^{a_{s}} \triangleright ' h^{d}\bigl) = h^{2d}.
$$
By repeatedly applying  \equref{C3} one easily obtains $u(h^{j}) =
h^{jd}$ for any $j \in \NN$. Furthermore, by using \cite[Lemma 5.1]{abm}, we have $u(x) \in
\mathcal{P}_{h^{d},\, 1} \bigl(T_{m^{2}}(q)\bigl)$ which leads to $u(x) = \mu(h^{d}-1)$ for some $\mu \in K$ if $d \neq 1$ or to $u(x) = \mu(h-1)+ \gamma x$ for some $\mu$, $\gamma \in K$ in the case when $d=1$.

Assume first that $d \neq 1$; thus $u(x) = \mu(h^{d}-1)$ and the compatibility condition \equref{C3} yields:
\begin{eqnarray*}
u(hx) &\stackrel{\equref{C3}} {=}& u(h) \bigl(p(h) \triangleright '
u(x)\bigl) = h^{d} \bigl(g_{i_{1}}^{a_{1}} g_{i_{2}}^{a_{2}} \cdots g_{i_{s}}^{a_{s}} \triangleright '
\mu(h^{d}-1)\bigl) = \mu h^{d} (h^{d}-1)\\
u(xh) &\stackrel{\equref{C3}} {=}& u(x_{(1)}) \bigl(p(x_{(2)})
\triangleright ' u(h)\bigl) = u(x) \bigl(p(h) \triangleright ' u(h)\bigl) + p(x)
\triangleright ' u(h) = \mu h^{d} (h^{d}-1).
\end{eqnarray*}
Now since $u(xh) = q u(hx)$ the above computations amount to $\mu (q-1) h^{d}(h^{d}-1) = 0$. Moreover, as $q \neq 1$
we obtain $\mu(h^{d}-1) = 0$ and thus $u(x) = 0$. However, this contradicts the fact that the
corresponding Hopf algebra map $\psi_{(u,\,p,\,r,\,v)}$ was assumed to be an isomorphism. Indeed we have:
\begin{eqnarray*}
\psi_{(u,\,p,\,r,\,v)}(x \# 1) = u(x_{(1)}) \bigl(p(x_{(2)}) \triangleright '
r(1)\bigl) \# p(x_{(3)}) v(1)= u(x_{(1)}) \# p(x_{(2)}) = u(x) \# p(h) + 1 \# p(x) = 0.
\end{eqnarray*}
Therefore the only possibility left is $d = 1$ and $u(x)
= \mu (h-1) + \gamma x$. In this case, we obtain:
\begin{eqnarray*}
u(hx) \stackrel{\equref{C3}} {=} u(h) \bigl(p(h) \triangleright '
u(x)\bigl) = h \Bigl[g_{i_{1}}^{a_{1}} g_{i_{2}}^{a_{2}} \cdots g_{i_{s}}^{a_{s}} \triangleright ' \Bigl(\mu(h-1)+
\gamma x \Bigl)\Bigl] = \mu h(h-1)+ \gamma \alpha_{i_{1}}^{a_{1}} \alpha_{i_{2}}^{a_{2}} \cdots \alpha_{i_{s}}^{a_{s}} hx.
\end{eqnarray*}
A similar computation gives $u(xh) = \mu h(h-1) + \gamma q hx$ and
since $u(xh) = q u(hx)$ we obtain $\mu = q \mu$ and $q \gamma
\alpha_{i_{1}}^{a_{1}} \alpha_{i_{2}}^{a_{2}} \cdots
\alpha_{i_{s}}^{a_{s}} = q \gamma$. As $q \neq 1$ we must have
$\mu = 0$. Thus $\gamma \neq 0$ (otherwise we would have $u(x) =
0$ which leads to the same contradiction as in the case where $d
\neq 1$) and so $\alpha_{i_{1}}^{a_{1}} \alpha_{i_{2}}^{a_{2}}
\cdots \alpha_{i_{s}}^{a_{s}}  = 1$. Consequently, $u$ takes the
following form:
\begin{eqnarray*}
u(h^{i}) = h^{i}, \,\,\, u(x^{i}) = \gamma^{i} x^{i},\,\,\, \gamma \in K^{*}, \,\,\, i \in \{0, 1, \cdots, m-1\}.
\end{eqnarray*}

We are now in a position to prove that the Hopf algebra map $p:
T_{m^{2}}(q) \to K[G]$ is indeed trivial. To this end, by setting $a = x$ the compatibility condition \equref{C1}
reduces to the following:
\begin{eqnarray*}
&& u(x_{(1)}) \ot p(x_{(2)}) = u(x_{(2)}) \ot p(x_{(1)})\\
&\Leftrightarrow& u(x) \ot p(h) + u(1) \ot p(x) = u(h) \ot p(x) +
u(x) \ot p(1)\\
&\Leftrightarrow& \gamma x \ot g_{i_{1}}^{a_{1}} g_{i_{2}}^{a_{2}} \cdots g_{i_{s}}^{a_{s}}  = \gamma x \ot 1.
\end{eqnarray*}
Now recall that $\gamma \neq 0$ which leads to $g_{i_{1}}^{a_{1}} g_{i_{2}}^{a_{2}} \cdots g_{i_{s}}^{a_{s}}  = 1$ and thus $p$ is the
trivial map, i.e. $p(a) = \varepsilon(a) 1$ for all $a \in
T_{m^{2}}(q)$. Hence, the Hopf algebra isomorphism corresponding to the quadruple $(u, \varepsilon , r, v)$ as in \equref{morfbicros} takes the following simplified form for all $a \# t \in  T_{m^{2}}(q)\, \# \, K[G]$:
$$
\psi_{(u,\,\varepsilon,\,r,\,v)}(a \# t) =  u(a) r(t_{(1)})\, \# ' \, v(t_{(2)}).
$$
We are left to prove that $v$ is a Hopf algebra automorphism. Indeed, consider $\varphi : T_{m^{2}}(q) \bowtie ' K[G] \to T_{m^{2}}(q) \bowtie K[G]$ to be the inverse of $\psi_{(u,\,\varepsilon,\,r,\,v)}$. Using the same arguments as above we can conclude that there exists a quadruple $(\overline{u},\,\varepsilon,\, \overline{r},\, \overline{v})$ as in  \equref{morfbicros} such that $\varphi = \psi_{(\overline{u},\,\varepsilon,\, \overline{r},\, \overline{v})}$, i.e.:
$$
\psi_{(\overline{u},\,\varepsilon,\, \overline{r},\, \overline{v})}(a\, \# ' t) =  \overline{u}(a) \overline{r}(t_{(1)})\, \#  \, \overline{v}(t_{(2)}),\,\,\, {\rm for} \,\, {\rm all}\,\, a \# ' t \in  T_{m^{2}}(q)\, \# ' \, K[G].
$$
Then, for any $t \in K[G]$ we have:
\begin{eqnarray*}
1 \, \# ' \, t = \psi_{(u,\,\varepsilon,\,r,\,v)} \circ \psi_{(\overline{u},\,\varepsilon,\, \overline{r},\, \overline{v})}(1 \, \#  ' \, t) = \psi_{(u,\,\varepsilon,\,r,\,v)} \bigl(\overline{r}(t_{(1)})\, \#  \, \overline{v}(t_{(2)})\bigl) \, = u\bigl(\overline{r}(t_{(1)}) \bigl) r\bigl(\overline{v}(t_{(2)})\bigl)\, \# ' \, v\bigl(\overline{v}(t_{(3)})\bigl).
\end{eqnarray*}
By applying $\varepsilon \ot {\rm Id}$ to the above identity yields $v \circ \overline{v} (t) = t$ for any $t \in K[G]$. Similarly one can prove that $\overline{v} \circ v(t) = t$ and the proof is now finished.
\end{proof}

Obviously, the Hopf algebra automorphism $v :K[G] \to K[G]$ from \thref{izo2} is uniquely determined by a group automorphism of $G$. In the sequel we will provide a complete classification for the Hopf algebras which factorize through the Taft algebra and the group Hopf algebra $K[D_{2n}]$. To this end, recall that for any $n > 2$ we have ${\rm Aut} (D_{2n}) = \{\gamma_{s,t} ~|~ s,\, t = 0,\,1, \cdots,\, n-1,\, (t,\, n) = 1\}$, where $\gamma_{s,t} : D_{2n} \to D_{2n}$ is the group automorphism defined as follows:
\begin{eqnarray*}
\gamma_{s,t}(c) = d^{s} c,\,\,\,\,\, \gamma_{s,t}(d) = d^{t}.
\end{eqnarray*}

Consider $\beta$, $\overline{\beta} \in U_{2}(K)$, $\sigma$,
$\overline{\sigma} \in U_{(n,\,n-2)}(K)$ and let
$T_{2nm^{2}}^{\beta,\, \sigma}(q)$ and
$T_{2nm^{2}}^{\overline{\beta},\, \overline{\sigma}}(q)$,
respectively, to be the corresponding Hopf algebras as in
\thref{main2}. Our next result provides necessary and sufficient
conditions for the two aforementioned Hopf algebras to be
isomorphic.

\begin{theorem}\thlabel{mainclasif}
Let $m$, $n \in \NN^{*}$, $m  \geq 2$ and $n  \geq 3$. Then the Hopf algebras  $T_{2nm^{2}}^{\beta,\, \sigma}(q)$ and $T_{2nm^{2}}^{\overline{\beta},\, \overline{\sigma}}(q)$ are isomorphic if and only if there exist integers $f$, $F \in  \{0,\,1,\, \cdots, \, m-1\}$ and $s$, $t \in \{0,\,1,\, \cdots, \, n-1\}$ such that $(t,\,n) = 1$ and the following compatibilities hold:
\begin{eqnarray}
m ~|~ 2f, \quad m ~|~ nF, \quad m ~|~ 2F,\eqlabel{izo01}\\
q^{f} = \beta\, \overline{\beta}\, \overline{\sigma}^{s},\quad q^{F} = \sigma\,  \overline{\sigma}^{t}\eqlabel{izo02}.
\end{eqnarray}
\end{theorem}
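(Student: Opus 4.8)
The plan is to reduce everything to the classification of Hopf algebra morphisms furnished by \thref{izo2} together with \cite[Theorem 3.2]{abm}, and then to translate the surviving compatibility conditions into the numerical relations \equref{izo01}--\equref{izo02}.

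First I would treat the ``only if'' direction. Assume $\varphi$ is a Hopf algebra isomorphism. By \thref{izo2} it has the form $\varphi = \varphi_{(u,r,v)}$ with $p = \varepsilon$, and from the proof of that theorem the coalgebra map $u$ is forced to be the \emph{algebra} endomorphism determined by $u(h) = h$, $u(x) = \gamma x$ for some $\gamma \in K^{*}$, while $v$ is induced by a group automorphism of $D_{2n}$, hence $v = \gamma_{s,t}$ with $v(c) = d^{s}c$, $v(d) = d^{t}$ and $(t,\,n) = 1$. Since $r : K[D_{2n}] \to T_{m^{2}}(q)$ is a unitary coalgebra map it carries group-likes to group-likes, so $r(D_{2n}) \subseteq \mathcal{G}(T_{m^{2}}(q)) = \langle h \rangle$; specializing \equref{C4} to group-like arguments (where $v(t) \triangleright' h^{i} = h^{i}$) shows that the restriction of $r$ to $D_{2n}$ is a group homomorphism into $\langle h \rangle \cong C_{m}$. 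Writing $r(c) = h^{f}$ and $r(d) = h^{F}$ with $f,\,F \in \{0,\dots,m-1\}$, the defining relations $c^{2} = d^{n} = 1$ and $cd = d^{n-1}c$ of $D_{2n}$ translate through this homomorphism precisely into the divisibilities $m \mid 2f$, $m \mid nF$ and $m \mid 2F$ of \equref{izo01}.

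The arithmetic heart of the argument is the remaining compatibility \equref{C5}, which (with $p = \varepsilon$ and $t$ group-like) simplifies to $r(t)\bigl(v(t) \triangleright' u(a)\bigr) = u(t \triangleright a)\, r(t)$ for all $a$ and $t$. I would verify this only on the algebra generators $a \in \{h,\,x\}$ and the group generators $t \in \{c,\,d\}$: it is trivial for $a = h$, and multiplicativity of $u$, $r$, $v$ together with the module-algebra axioms propagate it to all $a$ and $t$. Evaluating at $(t,\,a) = (c,\,x)$ and $(t,\,a) = (d,\,x)$, and using the commutation rule $x h^{f} = q^{f} h^{f} x$ in $T_{m^{2}}(q)$ together with $(d^{s}c)\triangleright' x = \overline{\beta}\,\overline{\sigma}^{s} x$ and $d^{t}\triangleright' x = \overline{\sigma}^{t}x$, yields after cancelling $\gamma \neq 0$ the two scalar identities $\overline{\beta}\,\overline{\sigma}^{s} = \beta\, q^{f}$ and $\overline{\sigma}^{t} = \sigma\, q^{F}$. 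The symmetric form \equref{izo02} then follows from the observation that $\beta$ and $\sigma$ are involutive: since $(n,\,n-2) = (n,\,2)$ divides $2$ we have $U_{(n,\,n-2)}(K) \subseteq U_{2}(K)$, so $\beta^{-1} = \beta$ and $\sigma^{-1} = \sigma$.

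For the converse I would run this computation backwards. Given $f,\,F,\,s,\,t$ satisfying \equref{izo01}, \equref{izo02} and $(t,\,n) = 1$, set $u(h) = h$, $u(x) = x$, let $r$ be the group homomorphism $D_{2n} \to \langle h \rangle$ with $r(c) = h^{f}$, $r(d) = h^{F}$ (well defined thanks to \equref{izo01}), and let $v = \gamma_{s,t}$, a Hopf algebra automorphism because $(t,\,n) = 1$. Then \equref{C1} and \equref{C6} hold trivially for $p = \varepsilon$, \equref{C3} records that $u$ is an algebra map, \equref{C4} that $r$ is a homomorphism, and \equref{C5} reduces, exactly as above, to \equref{izo02}; hence $(u,\varepsilon,r,v)$ is admissible and $\varphi_{(u,r,v)}$ is a Hopf algebra morphism by \cite[Theorem 3.2]{abm}. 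Bijectivity I would check directly: $\varphi_{(u,r,v)}$ sends the basis element $h^{i}x^{j}\,\#\,t$ to a nonzero scalar multiple of $h^{\,i + l(t)}x^{j}\,\#'\,v(t)$, where $r(t) = h^{l(t)}$, and since $t \mapsto v(t)$ is a bijection of $D_{2n}$ while $i \mapsto i + l(t) \ \mathrm{mod}\ m$ is a bijection for each fixed $t$, the basis is permuted up to nonzero scalars and $\varphi_{(u,r,v)}$ is an isomorphism. The only genuinely delicate points are the bookkeeping of the powers of $q$ arising from the commutation of $h$ and $x$ in \equref{C5}, and recognizing the involutivity of $\beta$ and $\sigma$ that converts the raw relations into \equref{izo02}.
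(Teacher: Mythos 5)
Your proposal is correct. The ``only if'' half is essentially the paper's own argument: reduce via \thref{izo2} and \cite[Theorem 3.2]{abm} to a quadruple $(u,\varepsilon,r,v)$ with $u(h^{i}x^{j})=\gamma^{j}h^{i}x^{j}$ and $v=\gamma_{s,t}$, extract \equref{izo01} from \equref{C4} evaluated on group-likes, and extract \equref{izo02} from \equref{C5} at $(c,x)$ and $(d,x)$ together with the involutivity $\beta^{-1}=\beta$, $\sigma^{-1}=\sigma$. Your repackaging of the \equref{C4} step --- the restriction of $r$ to $D_{2n}$ is a group homomorphism into $\langle h\rangle$ because group-likes act trivially on powers of $h$, so \equref{izo01} is exactly preservation of the defining relations --- is the paper's computation in nicer clothing; note only that $cd=d^{n-1}c$ literally gives $m\mid (n-2)F$, which must be combined with $m\mid nF$ to yield $m\mid 2F$. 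The genuine divergence is in the converse, where you gain in economy: the paper proves bijectivity of $\varphi=\psi_{({\rm Id},\,\varepsilon,\,r_{(f,F)},\,v_{(s,t)})}$ by explicitly constructing an inverse quadruple whose parameters $\tau_{1},\tau_{2},\overline{f},\overline{F}$ come from B\'ezout coefficients for $(t,n)=1$ and the division algorithm, followed by what the authors themselves call a ``rather long but straightforward computation''; you instead observe that $\varphi(h^{i}x^{j}\,\#\,w)=q^{j\lambda(w)}\,h^{i+\lambda(w)}x^{j}\,\#'\,v(w)$ for each $w\in D_{2n}$, where $r(w)=h^{\lambda(w)}$, so $\varphi$ carries the monomial basis bijectively onto nonzero scalar multiples of basis elements (since $v$ is a bijection of $D_{2n}$ and $i\mapsto i+\lambda(w)$ is a bijection modulo $m$ for fixed $w$) and is therefore invertible. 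This is shorter and cleaner, at the price of not exhibiting the inverse morphism, which the paper's method does. Finally, your propagation argument for why \equref{C5} need only be verified on $a\in\{h,x\}$ and $t\in\{c,d\}$ --- multiplicativity of $u$, $r$, $v$, the fact that group-likes act as algebra endomorphisms fixing all powers of $h$, and the fact that $r$ takes values in the invertible elements $h^{i}$ --- is sound, and it makes explicit a point the paper glosses over when it asserts that ``the first part of the proof implies that $\varphi$ is a Hopf algebra map''.
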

\begin{proof}
In light of (the proof of) \thref{izo2} the Hopf algebras $T_{2nm^{2}}^{\beta,\, \sigma}(q)$ and $T_{2nm^{2}}^{\overline{\beta},\, \overline{\sigma}}(q)$ are isomorphic if and only if there exists a quadruple $(u,\, p,\, r,\, v)$ satisfying the compatibilities \equref{C1}-\equref{C6} such that the corresponding morphism $\psi_{(u, p, r, v)}: T_{2nm^{2}}^{\beta,\, \sigma}(q) \to T_{2nm^{2}}^{\overline{\beta},\, \overline{\sigma}}(q)$ given by \equref{morfbicros}  is an isomorphism.
It follows from \thref{izo2} that $\psi_{(u, p, r, v)}$ being an isomorphism implies $p = \varepsilon$ and $v \in {\rm Aut}_{{\rm Hopf}}(K[D_{2n}])$. Furthermore, as $p$ is the trivial map, the compatibility conditions \equref{C1} and \equref{C6} are trivially fulfilled while \equref{C3} comes down to  $u: T_{m^{2}}(q) \to T_{m^{2}}(q)$ being a Hopf algebra morphism. More precisely, exactly as in (the proof of) \thref{izo2}, we can conclude that $u$ is given as follows for all $i$, $j \in \{0, 1, \cdots, m-1\}$:
\begin{eqnarray*}
u(h^{i}x^{j}) = \gamma^{j} h^{i}x^{j},\,\,\, {\rm where} \,\, \gamma \in K^{*}.
\end{eqnarray*}
We are left to discuss the compatibility conditions  \equref{C4}
and \equref{C5}. To start with, the Hopf algebra automorphism $v :
K[D_{2n}] \to K[D_{2n}]$ and the coalgebra map $r: K[D_{2n}] \to
T_{m^{2}}(q)$ are uniquely determined by some integers $s$, $t \in
\{0,\,1,\, \cdots, \, n-1\}$, $f$, $F \in  \{0,\,1,\, \cdots, \,
m-1\}$, such that $(t,\,n) = 1$ and:
\begin{eqnarray*}
v(c) = d^{s} c,\quad v(d) = d^{t} \quad {\rm and} \quad
r(c) = h^{f}, \quad r(d) = h^{F}.
\end{eqnarray*}
Using the compatibility condition \equref{C4} yields:
\begin{eqnarray*}
r(c^{2}) \stackrel{\equref{C4}} {=} r(c) \bigl(v(c) \triangleright ' r(c)\bigl) \,= h^{f} \bigl(d^{s}c \triangleright ' h^{f} \bigl) \,= h^{2f}
\end{eqnarray*}
which implies $m ~|~ 2f$. Similarly, using induction and \equref{C4} we obtain $r(d^{i}) = h^{iF}$ for all $i \in \NN$. In particular, we get $1 = r(d^{n}) = h^{nF}$ and thus $m ~|~ nF$. Furthermore, we have:
\begin{eqnarray*}
r(cd) &\stackrel{\equref{C4}} {=}& r(c) \bigl(v(c) \triangleright ' r(d)\bigl)\, = h^{f} \bigl(d^{s}c \triangleright ' h^{F} \bigl)\, = h^{f+F}\\
r(d^{n-1} c) &\stackrel{\equref{C4}} {=}&  r(d^{n-1}) \bigl(v(d^{n-1}) \triangleright ' r(c)\bigl) \, = h^{(n-1)F} \Bigl((d^{t})^{n-1} \triangleright ' h^{f} \Bigl)\, = h^{(n-1)F+f}\,= h^{-F+f}.
\end{eqnarray*}
Therefore we obtain $h^{f+F} = h^{-F+f}$ which leads to $m ~|~ 2F$ as desired. Next we look at the compatibility condition \equref{C5}; as $p$ is the trivial map it comes down to the following:
\begin{eqnarray}\eqlabel{C5'}
r(t_{(1)}) \bigl(v(t_{(2)}) \triangleright' u(a) \bigl) \, = u
(t_{(1)} \triangleright a) r(t_{(2)}).
\end{eqnarray}
By considering $t = c$ and $a = x$ in \equref{C5'} yields:
\begin{eqnarray*}
&& r(c) \bigl(v(c) \triangleright ' u(x)\bigl) \, =  u(c \triangleright x)r(c)\\
&\Leftrightarrow& h^{f}  \bigl(d^{s}c \triangleright ' \gamma x \bigl) \, =  \gamma\, \beta\,  x h^{f}\\
&\Leftrightarrow& \gamma\, \overline{\beta}\, \overline{\sigma}^{s}\, h^{f} x  \, =  \gamma\, \beta\, q^{f} \, h^{f} x.
\end{eqnarray*}
Hence we have $q^{f} = \beta\, \overline{\beta}\, \overline{\sigma}^{s}$. Similarly, by setting $t = d$ and $a = x$ in \equref{C5'} gives $q^{F} = \sigma\,  \overline{\sigma}^{t}$.

Putting all together, we proved that if $T_{2nm^{2}}^{\beta,\, \sigma}(q)$ and $T_{2nm^{2}}^{\overline{\beta},\, \overline{\sigma}}(q)$ are isomorphic Hopf algebras then there exist integers $s$, $t \in  \{0,\,1,\, \cdots, \, n-1\}$ and $f$, $F \in  \{0,\,1,\, \cdots, \, m-1\}$ with $(t,\,n) = 1$ such that \equref{izo01} and \equref{izo02} are fulfilled.

Conversely, if there exist $s$, $t \in  \{0,\,1,\, \cdots, \, n-1\}$ and $f$, $F \in  \{0,\,1,\, \cdots, \, m-1\}$ satisfying the conditions above, we construct a coalgebra map $r_{(f,\,F)}: K[D_{2n}] \to T_{m^{2}}(q)$ and a Hopf algebra automorphism $v_{(s,\,t)} : K[D_{2n}] \to K[D_{2n}]$ as follows:
\begin{eqnarray}
r_{(f,\,F)}(d^{i}c^{j}) = h^{iF+jf}, \quad v_{(s,\,t)}(c) = d^{s} c,\quad v_{(s,\,t)}(d) = d^{t} \quad {\rm for} \quad {\rm all} \quad i,\,j \in \NN.
\end{eqnarray}
Then it can be readily proved that the following map is a Hopf algebra isomorphism:
\begin{eqnarray*}
\varphi := \psi_{({\rm Id},\, \varepsilon,\, r_{(f,\,F)},\, v_{(s,\,t)})}: T_{2nm^{2}}^{\beta,\, \sigma}(q) \to T_{2nm^{2}}^{\overline{\beta},\, \overline{\sigma}}(q), \quad\varphi(a\, \# \, y) = a\, r_{(f,\,F)}(y_{(1)})\, \#\, v_{(s,\,t)}(y_{(2)}).
\end{eqnarray*}
where ${\rm Id} : T_{m^{2}}(q) \to T_{m^{2}}(q)$ is the identity
map on $T_{m^{2}}(q)$. First notice that the first part of the
proof implies that $\varphi$ is a Hopf algebra map. We are left to
prove that $\varphi$ is in fact an isomorphism. To start with,
since $(t,\,n) = 1$ there exist $\tau$, $\tau ' \in \ZZ$ such that
$t \tau + n \tau ' = 1$. Moreover, we can find unique integers
$\alpha$, $\beta$, $\eta$, $\mu \in \ZZ$, $\overline{f}$,
$\overline{F} \in \{0,\,1,\, \cdots, \, m-1\}$ and $\tau_{1}$,
$\tau_{2} \in  \{0,\,1,\, \cdots, \, n-1\}$ such that:
\begin{eqnarray}
\tau = \eta n + \tau_{1}, \quad -\tau_{1} s = \mu n + \tau_{2},\eqlabel{**}\\
-f - F \tau_{2} = \alpha m + \overline{f}, \quad -F \tau_{1} = \beta m + \overline{F}\eqlabel{*&}.
\end{eqnarray}
We are now in a position to construct the inverse of $\varphi$. To this end, let $r_{(\overline{f},\,\overline{F})}: K[D_{2n}] \to T_{m^{2}}(q)$ be a coalgebra map and $v_{(\tau_{2},\,\tau_{1})} : K[D_{2n}] \to K[D_{2n}]$ a Hopf algebra map, defined as follows:
 \begin{eqnarray}
r_{(\overline{f},\,\overline{F})}(d^{i} c^{j}) = h^{i\overline{F} + j\overline{f}}, \quad v_{(\tau_{2},\,\tau_{1})}(c) = d^{\tau_{2}} c,\quad v_{(\tau_{2},\,\tau_{1})}(d) = d^{\tau_{1}} \quad {\rm for} \quad {\rm all} \quad i,\, j \in \NN.
\end{eqnarray}
The proof will be finished once we prove that the map $\overline{\varphi} := \psi_{({\rm Id},\, \varepsilon,\, r_{\overline{f},\,\overline{F}},\, v_{(\tau_{2},\,\tau_{1})})}$ is the inverse of $\varphi$. Indeed, for instance we have:
\begin{eqnarray*}
\varphi \circ \overline{\varphi} \,(h^{i}x^{j} \, \# \, d^{k}) &=& \varphi\, \Bigl(h^{i}x^{j}r_{\overline{f},\,\overline{F}}(d^{k}) \,\, \# \,\, v_{(\tau_{2},\,\tau_{1})}(d^{k}) \Bigl)\,=\, \varphi\, \Bigl(h^{i}x^{j} h^{k\overline{F}} \,\, \# \,\, d^{k \tau_{1}} \Bigl)\\
&=&h^{i}x^{j} h^{k\overline{F}} \, r_{(f,\,F)}\bigl(d^{k \tau_{1}} \bigl)\,\, \# \,\, v_{(s,\,t)}\bigl(d^{k \tau_{1}}\bigl)\,=\,h^{i}x^{j} h^{k\overline{F} + k \tau_{1} F} \,\, \# \,\, d^{k \underline{\tau_{1} t}}\\
&\stackrel{\equref{**}} {=}& h^{i}x^{j} h^{k\overline{F} + k \tau_{1} F} \,\, \# \,\, d^{k (\tau t - \eta n t)}\,=\, h^{i}x^{j} h^{k\overline{F} + k \tau_{1} F} \,\, \# \,\, d^{k \underline{\tau t}}\,=\, h^{i}x^{j} h^{k(\underline{\overline{F} + \tau_{1} F})} \,\, \# \,\, d^{k (1 - n \tau ')}\\
&\stackrel{\equref{*&}} {=}& h^{i}x^{j} h^{-k \beta m} \,\, \# \,\, d^{k} \,=\,  h^{i}x^{j} \,\, \# \,\, d^{k}
\end{eqnarray*}
for all $i$, $j \in \{0,\,1,\, \cdots, \, m-1\}$ and $k \in
\{0,\,1,\, \cdots, \, n-1\}$. By a rather long but straightforward
computation one can see that in fact $\varphi \circ
\overline{\varphi} = {\rm Id}_{T_{2nm^{2}}^{\overline{\beta},\,
\overline{\sigma}}(q)}$ and $\overline{\varphi} \circ \varphi =
{\rm Id}_{T_{2nm^{2}}^{\beta,\, \sigma}(q)}$.
\end{proof}

Based on \thref{mainclasif}, we are now able to count the number of types of Hopf algebras which factorize through $T_{m^{2}}(q) $ and $ K[D_{2n}]$.
\begin{theorem}\thlabel{mainclasif2}
Let $m$, $n \in \NN^{*}$, $m  \geq 2$, $n  \geq 3$ and consider $\sharp_{n,\,m}^{\,\,\,q}$ to be the number of
types of Hopf algebras which factorize through $T_{m^{2}}(q) $ and $ K[D_{2n}]$. Then, we have:
\begin{eqnarray*}
\sharp_{n,\,m}^{\,\,\,q} = \left\{\begin{array}{rcl} 2, &
\mbox{if}& m \,\, \mbox{and}\,\, n  \,\, \mbox{are}  \,\, \mbox{both} \,\, \mbox{odd} \\ 3, &
\mbox{if}& m \,\, \mbox{is}\,\, \mbox{odd}\,\, \mbox{and}\,\, n  \,\, \mbox{is}\,\, \mbox{even} \\  1, &
\mbox{if}& m \,\, \mbox{and}\,\, n  \,\, \mbox{are}  \,\, \mbox{both} \,\, \mbox{even} \,\, \mbox{or} \,\,  m \,\, \mbox{is}\,\, \mbox{even}\,\, \mbox{and}\,\, n  \,\, \mbox{is}\,\, \mbox{odd.}\\ \end{array}\right.
\end{eqnarray*}
\end{theorem}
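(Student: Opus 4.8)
The plan is to turn \thref{mainclasif} into an explicit equivalence relation on the finite parameter set $P := U_{2}(K) \times U_{(n,\,n-2)}(K)$ and then count its classes in each of the four parity cases for $(m,\,n)$. The first step is to pin down $P$. Since $(n,\,n-2) = (n,\,2)$, the group $U_{(n,\,n-2)}(K)$ equals $U_{2}(K)$ when $n$ is even and is trivial when $n$ is odd; together with $|U_{2}(K)| = 2$ (we use throughout that $\Char K \neq 2$) this gives $|P| = 4$ in the cases ``$m$ odd, $n$ even'' and ``both even'', and $|P| = 2$ in the cases ``both odd'' and ``$m$ even, $n$ odd''.

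The second step is to convert the divisibility constraints \equref{izo01} into the sets of attainable values of $q^{f}$ and $q^{F}$, since only these enter \equref{izo02}. Writing $A = \{q^{f} \mid m \mid 2f\}$ and $B = \{q^{F} \mid m \mid 2F,\ m \mid nF\}$, I note that $m \mid 2f$ yields $A = \{1,\,-1\}$ for $m$ even (via $f = m/2$, where $q^{m/2} = -1$) and $A = \{1\}$ for $m$ odd, while $m \mid 2F$ and $m \mid nF$ together amount to $m \mid (n,\,2)\,F$, so that $B = \{1,\,-1\}$ exactly when both $m$ and $n$ are even and $B = \{1\}$ otherwise. The criterion of \thref{mainclasif} then reads: $(\beta,\,\sigma) \sim (\overline{\beta},\,\overline{\sigma})$ if and only if there exist $a \in A$, $b \in B$ and $s$, $t \in \{0,\,\dots,\,n-1\}$ with $(t,\,n) = 1$ such that $\beta\,\overline{\beta}\,\overline{\sigma}^{\,s} = a$ and $\sigma\,\overline{\sigma}^{\,t} = b$.

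The final step is the case count, aided by one arithmetic remark: when $n$ is even, $(t,\,n) = 1$ forces $t$ odd, hence $\overline{\sigma}^{\,t} = \overline{\sigma}$, whereas $s$ still ranges over both parities. When both $m$ and $n$ are even, $A = B = \{1,\,-1\}$ absorb the right-hand sides freely (take $s=0$, $t=1$), so both conditions are always solvable and $P$ collapses to a single class, giving $1$. When $m$ is even and $n$ is odd, $\overline{\sigma} = 1$ and $B = \{1\}$ make the second condition automatic while $A = \{1,\,-1\}$ makes the first solvable for all $\beta,\,\overline{\beta}$, again giving $1$. When both are odd, $A = B = \{1\}$ and $\overline{\sigma} = 1$, so the relations degenerate to $\beta\,\overline{\beta} = 1$ and can never identify $\beta = 1$ with $\beta = -1$, leaving the two points of $P$ separate, giving $2$. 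The delicate case, which I expect to be the main obstacle, is ``$m$ odd, $n$ even'': here $A = B = \{1\}$ but $\overline{\sigma}$ may be $-1$. The second relation $\sigma\,\overline{\sigma}^{\,t} = 1$ forces $\sigma = \overline{\sigma}$ (as $t$ is odd), so the two $\sigma$-values never mix; within $\sigma = -1$ the free parity of $s$ lets $\overline{\sigma}^{\,s}$ take either sign and collapses $\beta = \pm 1$ into one class, while within $\sigma = 1$ the factor $\overline{\sigma}^{\,s} = 1$ keeps $\beta$ an invariant. This produces the three classes $\{(1,\,1)\}$, $\{(-1,\,1)\}$, $\{(1,\,-1),\,(-1,\,-1)\}$, completing the count.
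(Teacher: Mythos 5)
Your proposal is correct and takes essentially the same route as the paper: both reduce everything to \thref{mainclasif} and then run the identical parity arithmetic ($m$ odd together with $m \mid 2f$ forces $f=0$, $n$ even together with $(t,n)=1$ forces $t$ odd, $f=m/2$ gives $q^f=-1$ when $m$ is even), arriving at the same class structure in each of the four cases. Your only departure is organizational rather than conceptual --- precomputing the attainable value sets $A$ and $B$ via the observation that $m \mid 2F$ and $m \mid nF$ amount to $m \mid (n,2)F$, and phrasing the criterion as an equivalence relation on $U_2(K)\times U_{(n,\,n-2)}(K)$, which streamlines the paper's pairwise isomorphism checks; note also that your explicit hypothesis $\Char K \neq 2$ is exactly the assumption the paper makes implicitly when it treats $U_2(K)$ as the two-element set $\{1,-1\}$.
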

\begin{proof}
According to \thref{main2}, any Hopf algebra which factorizes through $T_{m^{2}}(q) $ and $ K[D_{2n}]$ is isomorphic to $T_{2nm^{2}}^{\beta,\, \sigma}(q)$ for some $\beta \in U_{2}(K)$ and $\sigma \in U_{(n,\,n-2)}(K)$.

Suppose first that $n$ is odd. Then $\sigma \in U_{(n,\,n-2)}(K)$ implies $\sigma = 1$ and we are left to decide if the Hopf algebras  $T_{2nm^{2}}^{1,\, 1}(q)$ and $T_{2nm^{2}}^{-1,\, 1}(q)$ are isomorphic.
By \thref{mainclasif} we have an isomorphism between the aforementioned Hopf algebras if and only if there exist integers $f$, $F \in  \{0,\,1,\, \cdots, \, m-1\}$, $s$, $t \in \{0,\,1,\, \cdots, \, n-1\}$ such that $(t,\,n) = 1$ and the following compatibilities hold:
\begin{eqnarray}\eqlabel{nrizo}
m ~|~ 2f, \quad m ~|~ nF, \quad m ~|~ 2F,\quad
q^{f} = -1,\quad q^{F} = 1.
\end{eqnarray}
As $q$ is a primitive $m$-th root of unity and $q^{F} = 1$, where
$F \in  \{0,\,1,\, \cdots, \, m-1\}$, we obtain $F = 0$. If $m$ is
odd as well and $m ~|~ 2f$ it follows that $m ~|~ f$. Now since $f
\in  \{0,\,1,\, \cdots, \, m-1\}$ we get $f = 0$ which contradicts
$q^{f} = -1$. Consequently, if $n$ and $m$ are both odd then
$T_{2nm^{2}}^{1,\, 1}(q)$ is not isomorphic to $T_{2nm^{2}}^{-1,\,
1}(q)$. Hence and we have two types of Hopf algebras which
factorize through $T_{m^{2}}(q) $ and $ K[D_{2n}]$.\\ If $m$ is
even and $m = 2 m'$ for some $m' \in \NN^{*}$, it is
straightforward to see that the integers $s = t = 1$, $f = m'$ and
$F=0$ fulfill the conditions in \equref{nrizo}. Therefore, in this
case $T_{2nm^{2}}^{1,\, 1}(q)$ is isomorphic to
$T_{2nm^{2}}^{-1,\, 1}(q)$ and we obtain $\sharp_{n,\,m}^{\,\,\,q}
= 1$.

Assume now that $n$ is even and $n = 2 \overline{n}$ for some $ \overline{n} \in \NN^{*}$. Then $\beta$, $\sigma \in \{1, \, -1\}$ and the Hopf algebras which factorize through $T_{m^{2}}(q) $ and $ K[D_{2n}]$ are the following:
$$T_{2nm^{2}}^{1,\, 1}(q),\quad T_{2nm^{2}}^{-1,\, -1}(q),\quad T_{2nm^{2}}^{1,\, -1}(q), \quad T_{2nm^{2}}^{-1,\, 1}(q).$$
If $m$ is odd, $m = 2 \overline{m} + 1$ for some $\overline{m} \in \NN^{*}$, we will see that $T_{2nm^{2}}^{-1,\, -1}(q)$ is isomorphic to $T_{2nm^{2}}^{1,\, -1}(q)$ while any two of the remaining three Hopf algebras are non-isomorphic. Indeed, it can be readily seen that the integers $s = t = 1$, $f = F = 0$ fulfill the compatibility conditions \equref{izo01} and \equref{izo02} in \thref{mainclasif} which leads us to the conclusion that $T_{2nm^{2}}^{-1,\, -1}(q)$ is isomorphic to $T_{2nm^{2}}^{1,\, -1}(q)$. Similarly, by \thref{mainclasif}, the Hopf algebras $T_{2nm^{2}}^{1,\, 1}(q)$ and $T_{2nm^{2}}^{-1,\, -1}(q)$ are isomorphic if and only if there exist integers $f$, $F \in  \{0,\,1,\, \cdots, \, m-1\}$, $s$, $t \in \{0,\,1,\, \cdots, \, n-1\}$ such that $(t,\,n) = 1$ and the following hold:
\begin{eqnarray}\eqlabel{nrizo2}
m ~|~ 2f, \quad m ~|~ nF, \quad m ~|~ 2F,\quad
q^{f} = 1,\quad q^{F} = (-1)^{t}.
\end{eqnarray}
To start with, notice that $n$ even and $(t,\,n) = 1$ implies $t$ odd and therefore $ q^{F} = -1$. Moreover, as $m ~|~ 2F$ and $m = 2 \overline{m} + 1$ we obtain $m ~|~ F$. Now since $F \in  \{0,\,1,\, \cdots, \, m-1\}$ we get $F = 0$. However, this contradicts $ q^{F} = -1$ and therefore $T_{2nm^{2}}^{1,\, 1}(q)$ is not isomorphic to $T_{2nm^{2}}^{-1,\, -1}(q)$. In the same manner it can be proved that $T_{2nm^{2}}^{1,\, 1}(q)$ is not isomorphic to $T_{2nm^{2}}^{-1,\, 1}(q)$ and respectively that $T_{2nm^{2}}^{-1,\, -1}(q)$ is not isomorphic to $T_{2nm^{2}}^{-1,\, 1}(q)$. Hence, in this case we have $\sharp_{n,\,m}^{\,\,\,q} = 3$.

Finally, if $m$ is even one can easily conclude by pursuing a similar strategy that all Hopf algebras $T_{2nm^{2}}^{\beta,\, \sigma}(q)$ are in fact isomorphic to $T_{2nm^{2}}^{1,\, 1}(q)$ and therefore $\sharp_{n,\,m}^{\,\,\,q} = 1$.
\end{proof}

\end{document}